
\documentclass[11pt]{amsart}

\usepackage{amsmath, amsthm, amssymb}
 
\usepackage{palatino}         
\linespread{1.05}             

\usepackage{comment}

\usepackage[abs]{overpic}		  

\usepackage{xcolor}
\definecolor{indigo}{rgb}{0.29, 0.0, 0.51}  
\usepackage[colorlinks, urlcolor=indigo, linkcolor=indigo, citecolor=indigo]{hyperref}

\textheight 8.3in                       
\textwidth 6in \hoffset 0.25in          
\evensidemargin 0in \oddsidemargin 0in 

\theoremstyle{plain}
\newtheorem{theorem}{Theorem}
\newtheorem{corollary}[theorem]{Corollary}
\newtheorem{proposition}[theorem]{Proposition}
\newtheorem{lemma}[theorem]{Lemma}

\theoremstyle{definition}

\theoremstyle{remark}
\newtheorem{remark}[theorem]{Remark}

\numberwithin{theorem}{section}


\newcommand{\dfn}[1]{{\em #1}}        
\newcommand{\R}{\mathbb{R}}           
\newcommand{\Z}{\mathbb{Z}}           
\DeclareMathOperator{\Ima}{Im}        

\makeatletter
\newcommand*\bigcdot{\mathpalette\bigcdot@{0.6}}
\newcommand*\bigcdot@[2]{\mathbin{\vcenter{\hbox{\scalebox{#2}{$\m@th#1\bullet$}}}}}
\makeatother








\begin{document}

\title[Lagrangian Cobordisms]{On Lagrangian Cobordisms And the Chekanov-Eliashberg DGA}

\author{Sierra Knavel}

\author{Thomas Rodewald}

\address{School of Mathematics \\ Georgia Institute of Technology \\ Atlanta, GA}
\email{sknavel3@gatech.edu} 

\address{School of Mathematics \\ Georgia Institute of Technology \\  Atlanta, GA}
\email{tomrodewald@gatech.edu}


\begin{abstract}
In this paper, we consider exact Lagrangian cobordisms and the map they induce on the Chekanov-Eliashberg DGAs of their Legendrian ends as defined by Ekholm, Honda, and Kalman. Specifically, we show how to adapt this map to linearizations of the DGA using augmentations. We then show its induced map on linearized Legendrian contact homology is invariant under Lagrangian isotopy under mild hypotheses, as well as its induced map on higher order product structures. 
\end{abstract}

\maketitle

\section{Introduction}
Lagrangian cobordisms between Legendrian knots are a central area of study in contact and symplectic geometry and have been intensely studied in the past couple of decades. There are many examples of constructions of such cobordisms \cite{BourgeoisSabloffTraynor, CornwellNgSivek, EckholmHondaKalman, GuadagniSabloffYacavone, Lin16}, as well as obstructions to their existence \cite{Chantraine_10, CornwellNgSivek, EkholmEtnyreSullivan, Gromov}. In this paper, we study Lagrangian cobordisms and, in particular, the Differential Graded Algebra (DGA) map they induce on linearized Legendrian contact homology. The existence of such a map was shown in \cite{Pan}, and a similar version in the language of generating family homology was also shown in \cite{SabloffTraynor}. Naturally one might ask how isotoping an exact Lagrangian cobordism through Lagrangians effects this map. After explicitly defining the DGA map on the chain level, we show under mild hypotheses that its induced map on the homology is invariant under Lagrangian isotopy.

To establish this invariance, consider the Chekanov-Eliashberg DGA. Chekanov \cite{Chekanov_DGA_02} and Eliashberg \cite{Eliashberg1998} independently define a DGA $(\mathcal{A}_\Lambda, \partial)$ associated to a Legendrian $\Lambda$ in $(\R, \xi_{std})$ to distuinguish Legendrian knots which cannot be distinguished by their classical invariants $tb$ and $r$ \cite{Chekanov_DGA_02}. Since this result, the DGA has been an instrumental tool in the study of Legendrian knots, see \cite{EpsteinFuchsMeyer, EtnyreNgSabloff, NgRutherford, NgCharAlgebra, Sabloff}. Because the DGA, and by extension its homology, is a non-commutative ring, its computation is difficult. However, in some cases one can distinguish pairs of Legendrian knots by linearizing the DGA using augmentations. 

Chekanov \cite{Chekanov_DGA_02} defines an augmentation as a chain map 
$$\epsilon: (\mathcal{A}_\Lambda, \partial) \rightarrow (\Z/2\Z,0)$$ 
in order to modify the differential of the DGA to a new differential called $\partial_{\epsilon}$. One can then `linearize' $\partial_{\epsilon}$ to obtain a differential $\partial_{\epsilon}^\ell$ on a finite-dimensional vector space $F_\Lambda$. The homology of the chain complex yielded by this differential is the \dfn{linearized contact homology} of $\Lambda$. For a given Legendrian $\Lambda$, if an augmentation exists, then the linearized contact homology is easier to compute and the set of all linearized contact homologies is invariant up to Legendrian isotopy. The base ring $\Z/2\Z$ has since been generalized to a unital ring \cite{EtnyreNgSabloff}; in this paper we use $\Z$. 

Let $L$ be an exact Lagrangian cobordism $L$ from a Legendrian knot $\Lambda_1$ to a Legendrian $\Lambda_2$. Eckholm, Honda, and Kalman \cite{EckholmHondaKalman} define the following chain map between their corresponding DGAs.
$$\Phi_L: (\mathcal{A}_{\Lambda_2}, \partial_{\Lambda_2}) \to (\mathcal{A}_{\Lambda_1}, \partial_{\Lambda_1})$$ 
Once again, the non-commutativity of this map makes computations difficult, and therefore we aim to linearize $\Phi_L$, show that it induces a homomorphism on the linearized contact homology, and apply it to examples through direct computation. That is, given an augmentation $\epsilon_1$ for $\Lambda_1$, we consider the augmentation $\epsilon_1 \circ \Phi_L$ of $\Lambda_2$ and then define a map between the complexes $F_{\Lambda_i}$ (see Section~\ref{Sec:cobordism_map}),
$$(\Phi_L^{\epsilon_1})^\ell: F_{\Lambda_2} \rightarrow F_{\Lambda_1}.$$

We show that this gives a chain map, thus inducing a homomorphism on the linearized contact homology.  

\begin{proposition}\label{DGA} Let $L$ be an exact Lagrangian cobordism from $\Lambda_1$ to $\Lambda_2$, and $\epsilon_1$ an augmentation of $(\mathcal{A}_{\Lambda_1}, \partial_{\Lambda_1})$ and $\epsilon_2 = \epsilon_1 \circ \Phi_L$. Then
$$(\Phi_L^{\epsilon_1})^\ell \circ \partial_{\epsilon_2}^\ell = \partial_{\epsilon_1}^\ell \circ (\Phi_L^{\epsilon_1})^\ell.$$
\end{proposition}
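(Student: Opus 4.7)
The plan is to lift Proposition~\ref{DGA} to an identity at the level of the full Chekanov--Eliashberg DGAs by twisting with the augmentations, and then read off the word-length one part. Let $\varphi_{\epsilon_i}\colon \mathcal{A}_{\Lambda_i}\to\mathcal{A}_{\Lambda_i}$ denote the algebra automorphism sending each Reeb generator $a$ to $a+\epsilon_i(a)$, with inverse $a\mapsto a-\epsilon_i(a)$. Because $\epsilon_i\circ\partial_{\Lambda_i}=0$, the conjugated differential $\partial_{\epsilon_i}:=\varphi_{\epsilon_i}\circ\partial_{\Lambda_i}\circ\varphi_{\epsilon_i}^{-1}$ has no constant term when applied to a generator, and its word-length one output is the linearization $\partial_{\epsilon_i}^\ell$. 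The choice $\epsilon_2=\epsilon_1\circ\Phi_L$ is precisely what makes the twisted cobordism map $\Phi_L^{\epsilon_1}:=\varphi_{\epsilon_1}\circ\Phi_L\circ\varphi_{\epsilon_2}^{-1}$ send every generator of $\mathcal{A}_{\Lambda_2}$ into the augmentation ideal of $\mathcal{A}_{\Lambda_1}$: evaluating on a generator $a$ yields $\varphi_{\epsilon_1}(\Phi_L(a)-\epsilon_2(a))$, whose constant term is $\epsilon_1(\Phi_L(a))-\epsilon_2(a)=0$. Its word-length one component is then $(\Phi_L^{\epsilon_1})^\ell$.

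With this in place, I would transport the Ekholm--Honda--Kalman identity $\Phi_L\circ\partial_{\Lambda_2}=\partial_{\Lambda_1}\circ\Phi_L$ through the conjugation by inserting $\varphi_{\epsilon_j}^{-1}\varphi_{\epsilon_j}$ in the appropriate slots, yielding the twisted chain-map identity
\[
\Phi_L^{\epsilon_1}\circ\partial_{\epsilon_2}=\partial_{\epsilon_1}\circ\Phi_L^{\epsilon_1}
\]
in $\mathcal{A}_{\Lambda_1}$. The remaining step is to extract the length-one output of each side when evaluated on a generator. Since $\Phi_L^{\epsilon_1}$ is multiplicative and sends generators into the augmentation ideal, its image of a monomial of length $k$ is a sum of monomials of length $\ge k$; likewise, $\partial_{\epsilon_1}$ is a derivation that takes generators into the augmentation ideal, so its image of a monomial of length $k$ has output length $\ge k$. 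Hence length-$k$ contributions with $k\ge 2$ from the inner map can never reach the length-one slot, and the only surviving contribution on each side is the composition of the linearizations. This gives exactly $(\Phi_L^{\epsilon_1})^\ell\circ\partial_{\epsilon_2}^\ell=\partial_{\epsilon_1}^\ell\circ(\Phi_L^{\epsilon_1})^\ell$.

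The main obstacle I anticipate is this last bookkeeping step: verifying that taking the linear part of a composition of augmentation-ideal-preserving maps really produces the composition of the linear parts, and aligning signs, the direction of $\Phi_L$ relative to the cobordism, and the precise linearization convention of Section~\ref{Sec:cobordism_map} with those of Ekholm--Honda--Kalman. The twist and the conjugated chain-map identity are formal once $\epsilon_2=\epsilon_1\circ\Phi_L$ has been used; the substance of the argument lies in showing that higher-length contributions drop out, which is exactly where the compatibility between the two augmentations is being consumed.
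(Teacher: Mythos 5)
Your proposal is correct and follows essentially the same route as the paper: establish the twisted chain-map identity $\Phi_L^{\epsilon_1}\circ\partial_{\epsilon_2}=\partial_{\epsilon_1}\circ\Phi_L^{\epsilon_1}$ by conjugation, show that $\epsilon_2=\epsilon_1\circ\Phi_L$ forces $\Phi_L^{\epsilon_1}$ (and each $\partial_{\epsilon_i}$) to preserve the augmentation ideal, and conclude that higher-length terms cannot contribute to the linear part of either composition. The only step you take for granted that the paper spells out is the Leibniz rule for the twisted differential $\partial_{\epsilon_1}$ (the paper's Lemma~\ref{Leibniz}), which is needed to justify that a length-$k$ monomial maps to terms of length at least $k$; it is routine, but worth recording since it is used essentially in the bookkeeping you flag.
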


We remind the reader the existence of this map was shown in \cite{Pan}, and in different language in \cite{SabloffTraynor}. In order to use $(\Phi_L^{\epsilon_1})^\ell$ to study Lagrangian cobordisms, we need to prove some basic facts about the map and its behavior under geometric phenomena, such as Lagrangian isotopy. Essentially, we have a linearized version of \cite[Theorems~1.3 part 1) and 1.4]{EckholmHondaKalman}.

\begin{theorem}\label{LinearizedEHK}
   The DGA map induced by the exact Lagrangian cobordism $L$ from $\Lambda_1$ to $\Lambda_2$, $$(\Phi_L^{\epsilon_1})^\ell: (F_2, \partial_{\epsilon_2}^\ell) \rightarrow (F_1, \partial_{\epsilon_1}^\ell),$$ has the following properties: \\
    \indent 1) If $L = \Lambda \times \R$, then $(\Phi_L^{\epsilon_1})^\ell = Id_{\mathcal{A}_\Lambda}$. \\
    \indent 2) Let $L_1$ and $ L_2$ be exact Lagrangian cobordisms from $\Lambda_1$ and $\Lambda_2$ that are isotopic through exact Lagrangian cobordisms. If $\epsilon_1 \circ \Phi_{L_1} = \epsilon_1 \circ \Phi_{L_2}$ and the image of $\partial_{\epsilon_2}$ has no quadratic terms with Reeb chords of grading $-1$, then $(\Phi_{L_1}^{\epsilon_1})^\ell$ and $(\Phi_{L_2}^{\epsilon_1})^\ell$ are chain homotopic.\\
\end{theorem}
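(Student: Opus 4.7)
The plan for part 1) is to invoke Theorem~1.3 part 1) of Ekholm--Honda--Kalman \cite{EckholmHondaKalman}, which states that for the trivial cylinder $L = \Lambda \times \R$ the induced DGA map is the identity $\Phi_L = \mathrm{Id}_{\mathcal{A}_\Lambda}$. Since $\Phi_L$ is the identity, $\epsilon_2 = \epsilon_1 \circ \Phi_L = \epsilon_1$, so the augmentation-induced change of variables agrees on source and target. Unwinding the definition of $(\Phi_L^{\epsilon_1})^\ell$ from Section~\ref{Sec:cobordism_map}, conjugation of the identity by the change of variables is still the identity, and its linear part is $\mathrm{Id}_{F_\Lambda}$. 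This part is a formal consequence of the definitions once EHK is invoked.

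For part 2), I would lift the problem to the DGA level and apply Theorem~1.4 of Ekholm--Honda--Kalman, which provides a DGA chain homotopy between $\Phi_{L_1}$ and $\Phi_{L_2}$: a $(\Phi_{L_1}, \Phi_{L_2})$-derivation $K \colon \mathcal{A}_{\Lambda_2} \to \mathcal{A}_{\Lambda_1}$ satisfying
$$\Phi_{L_1} - \Phi_{L_2} = \partial_{\Lambda_1} \circ K + K \circ \partial_{\Lambda_2}.$$
The hypothesis $\epsilon_1 \circ \Phi_{L_1} = \epsilon_1 \circ \Phi_{L_2}$ ensures that both linearized maps share the common domain $(F_2, \partial_{\epsilon_2}^\ell)$ and codomain $(F_1, \partial_{\epsilon_1}^\ell)$. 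I would then define a candidate chain homotopy $K^\ell \colon F_2 \to F_1$ by conjugating $K$ by the augmentation-induced change of variables and extracting the linear part, mirroring the construction of $(\Phi_L^{\epsilon_1})^\ell$ from Section~\ref{Sec:cobordism_map}.

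The main verification is then the linearized chain homotopy identity
$$(\Phi_{L_1}^{\epsilon_1})^\ell - (\Phi_{L_2}^{\epsilon_1})^\ell = \partial_{\epsilon_1}^\ell \circ K^\ell + K^\ell \circ \partial_{\epsilon_2}^\ell,$$
obtained by conjugating the EHK identity and extracting its linear part. The subtlety is that $K$ is a derivation rather than a DGA map: applied to a quadratic term $b_i b_j$ appearing in $\partial_{\Lambda_2}$, $K$ produces mixed terms of the form $K(b_i)\,\Phi_{L_2}(b_j)$ and $\Phi_{L_1}(b_i)\,K(b_j)$. After conjugation and linearization these contribute ``error'' terms in which one of $b_i, b_j$ is augmented while the other contributes linearly, and such terms vanish only when the augmented chord has grading $0$, since augmentations are supported in grading $0$.

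The main obstacle is exactly this bookkeeping of error terms coming from the derivation property of the DGA chain homotopy. The hypothesis that the image of $\partial_{\epsilon_2}$ contains no quadratic terms involving grading $-1$ Reeb chords is tailored precisely to rule out these obstructions: a surviving grading $-1$ chord cannot be absorbed by $\epsilon_2$ and would break the linearized identity, while its absence forces all the error terms to cancel and the identity to hold on the nose. Once this verification is complete, part 2) follows. Some care must be taken with signs when working over $\Z$ rather than $\Z/2\Z$, but no additional geometric input is required beyond the EHK isotopy theorem.
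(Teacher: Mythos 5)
Your strategy for both parts matches the paper's: part 1) is the direct computation you describe, and part 2) proceeds by conjugating the Ekholm--Honda--Kalman chain homotopy by $\phi^{\epsilon_1}$ and $\phi^{-\epsilon_2}$, linearizing, and checking that linearization commutes with the two compositions, with the grading hypothesis controlling the quadratic terms. However, you have misidentified the mechanism that the grading $-1$ hypothesis is there to kill, and your stated reason would not let the key step close. You write that the error terms are those ``in which one of $b_i, b_j$ is augmented while the other contributes linearly'' and that ``a surviving grading $-1$ chord cannot be absorbed by $\epsilon_2$.'' Neither $b_i$ nor $b_j$ is augmented directly here, and the non-augmentability of a grading $-1$ chord is not the obstruction --- it is nearly the opposite. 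The homotopy has degree $+1$, so the generator-level map $K$ sends a chord $b_j$ of grading $-1$ to a sum of words of total grading $0$; such a word can consist entirely of grading-$0$ chords of $\Lambda_1$, each of which $\epsilon_1$ (applied through $\phi^{\epsilon_1}$) may send to a nonzero integer. This produces a constant term in $K^{\epsilon_1}(b_j) = \phi^{\epsilon_1}\circ K\circ\phi^{-\epsilon_2}(b_j)$, and then $\Phi_{L_1}^{\epsilon_1}(b_i)\,K^{\epsilon_1}(b_j)$ acquires a \emph{linear} term even though $b_ib_j$ is quadratic and hence killed by $\partial_{\epsilon_2}^\ell$. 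Since $\Phi_{L_i}^{\epsilon_1}$ has no constant terms (Lemma~\ref{NoConstants}), this is the only way a quadratic term of $\partial_{\epsilon_2}$ can contribute to the linear part of the composed homotopy term, and forbidding grading $-1$ chords in the quadratics is exactly what removes it. An argument based on $\epsilon_2$ vanishing on grading $-1$ chords will not establish $(\Omega^{\epsilon_1}\circ\partial_{\epsilon_2})^\ell = (\Omega^{\epsilon_1})^\ell\circ\partial_{\epsilon_2}^\ell$.

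A smaller omission: to carry out this term-by-term analysis you need to know that the conjugated homotopy $\phi^{\epsilon_1}\circ\Omega\circ\phi^{-\epsilon_2}$ (where $\Omega$ is the derivation you are calling $K$) is again a $(\Phi_{L_1}^{\epsilon_1},\Phi_{L_2}^{\epsilon_1})$-derivation built from $K^{\epsilon_1}$ on generators. Your proposal takes this for granted, but it requires a bookkeeping argument matching up the words produced by $\phi^{-\epsilon_2}$ with those produced by conjugating each factor separately; this is the content of Lemma~\ref{AugmentedOmega} in the paper and is where the hypothesis $\epsilon_1\circ\Phi_{L_1}=\epsilon_1\circ\Phi_{L_2}$ (so that a single $\epsilon_2$ serves both cobordisms) is used.
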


\begin{remark}
 It is natural to think that given that the map $\Phi_L$ changes by a chain homotopy equivalence when $L$ undergoes an exact Lagrangian isotopy, then the same would be true for the linearized maps, but the above theorem shows that extra information is needed to do this. In particular, we need extra hypotheses on the DGAs involved to guarantee that we can define maps on their linearizations that are invariant under exact Lagrangian isotopy (see Remark~\ref{LinearizedThmRemark}). We do not know if these hypotheses are necessary, but it is not clear how one can establish the desired properties of $(\Phi_L^{\epsilon_1})^\ell$ without them.
\end{remark}

    Given an augmentation $\epsilon$ of the DGA for a Legendrian knot $\Lambda$, one may define a product structure, in fact an $A_\infty$ structure, on the dualized linear contact homology \cite{CivanEtnyreKoprowskiSabloffWalker}. It was shown in \cite{CivanEtnyreKoprowskiSabloffWalker} that the product structure and the entire $A_\infty$ structure is a stronger invariant of a Legendrian knot than just the linearization alone. Pan showed the existence of an $A_\infty$ morphism between the $A_\infty$ structures on their Legendrian ends in \cite{Pan}, and we replicate this result using our argument on the chain level.

\begin{theorem}
\label{AinftyTHM}
Let $(F_1^*, \{m_k^{\epsilon_1}\}_{k \geq 1})$ and $(F_2^*, \{m_k^{\epsilon_2}\}_{k \geq 1})$ be the $A_\infty$ structures on the linearized Legendrian cochain groups. Then the collection of maps $\{((\Phi_L^{\epsilon_1})^k)^*: (F_1^*)^{\otimes k} \rightarrow F_2^*\}_{k \geq 1}$ is an $A_\infty$ morphism.
\end{theorem}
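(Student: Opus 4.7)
The plan is to deduce the $A_\infty$ morphism identities from the single DGA chain-map equation $\Phi_L \circ \partial_{\Lambda_2} = \partial_{\Lambda_1} \circ \Phi_L$ of Ekholm--Honda--Kalman, in the same way that the $A_\infty$ relations on $(F_i^*, \{m_k^{\epsilon_i}\})$ are deduced in \cite{CivanEtnyreKoprowskiSabloffWalker} from $\partial_{\epsilon_i}^2 = 0$. The twisting condition $\epsilon_2 = \epsilon_1 \circ \Phi_L$, which also appears in Proposition~\ref{DGA}, is exactly what is needed for conjugation by the augmentations to turn the chain-map equation into its twisted form $\Phi_L^{\epsilon_1} \circ \partial_{\epsilon_2} = \partial_{\epsilon_1} \circ \Phi_L^{\epsilon_1}$, so all of the analytic input is already available from Section~\ref{Sec:cobordism_map}.

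For each $k \geq 1$, I would define the chain-level component $(\Phi_L^{\epsilon_1})^k : F_2 \to F_1^{\otimes k}$ by projecting $\Phi_L^{\epsilon_1}|_{F_2}$ onto the length-$k$ subspace of $\mathcal{A}_{\Lambda_1}$, and write $\partial_{\epsilon_i,k} : F_i \to F_i^{\otimes k}$ for the length-$k$ component of the twisted differential restricted to generators, so that $m_k^{\epsilon_i}$ on $F_i^*$ is by construction the dual of $\partial_{\epsilon_i,k}$. Applying the twisted chain-map identity to a generator $x \in F_2$ and projecting onto the length-$n$ subspace of $\mathcal{A}_{\Lambda_1}$, the left-hand side expands, via the multiplicative extension of $\Phi_L^{\epsilon_1}$, as
$$\sum_{k \geq 1} \sum_{i_1 + \cdots + i_k = n} \bigl((\Phi_L^{\epsilon_1})^{i_1} \otimes \cdots \otimes (\Phi_L^{\epsilon_1})^{i_k}\bigr) \circ \partial_{\epsilon_2,k}(x),$$
while by the Leibniz rule on $\mathcal{A}_{\Lambda_1}$ the right-hand side expands as
$$\sum_{j \geq 1} \sum_{\substack{r+1+t = j \\ s = n - j + 1}} \bigl(\mathrm{id}^{\otimes r} \otimes \partial_{\epsilon_1, s} \otimes \mathrm{id}^{\otimes t}\bigr) \circ (\Phi_L^{\epsilon_1})^j(x).$$

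Dualizing each side transforms these into precisely the two sides of the standard $A_\infty$ morphism relation: the first becomes a sum of terms $m_k^{\epsilon_2} \circ \bigl(((\Phi_L^{\epsilon_1})^{i_k})^* \otimes \cdots \otimes ((\Phi_L^{\epsilon_1})^{i_1})^*\bigr)$, with tensor factors reversed by dualization, while the second becomes a sum of terms $((\Phi_L^{\epsilon_1})^j)^* \circ \bigl(\mathrm{id}^{\otimes t} \otimes m_s^{\epsilon_1} \otimes \mathrm{id}^{\otimes r}\bigr)$. The main obstacle is purely algebraic bookkeeping: matching the Koszul signs produced when dualizing the tensor products $(\Phi_L^{\epsilon_1})^{i_1} \otimes \cdots \otimes (\Phi_L^{\epsilon_1})^{i_k}$ to the sign conventions for $A_\infty$ morphisms used in \cite{CivanEtnyreKoprowskiSabloffWalker}, and ensuring that the word-reversal induced by dualization is applied consistently on both sides. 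No new geometry enters beyond what already justified Proposition~\ref{DGA}.
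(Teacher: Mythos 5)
Your proposal follows essentially the same route as the paper: project the twisted chain-map identity $\Phi_L^{\epsilon_1}\circ\partial_{\epsilon_2}=\partial_{\epsilon_1}\circ\Phi_L^{\epsilon_1}$ onto length-$n$ words, expand one side by multiplicativity of $\Phi_L^{\epsilon_1}$ and the other by the Leibniz rule (these expansions are exactly the paper's two preparatory lemmas, and both rely on the no-constant-terms lemma to make the index ranges finite), then dualize to obtain the $A_\infty$ morphism relation. The only point of difference is that the sign bookkeeping you flag as the main obstacle is moot in the paper, which works over $\Z/2\Z$ in this section.
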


An implication of Theorem~\ref{AinftyTHM} is that $((\Phi_L^{\epsilon_1})^\ell)^*$ (which is $((\Phi_L^{\epsilon_1})^1)^*$ in the notation in Theorem~\ref{AinftyTHM} above) preserves the higher order product structures described in Section~\ref{Ainftybackground}. As a corollary to Theorem~\ref{LinearizedEHK} and Theorem~\ref{AinftyTHM}, under certain conditions, we have invariance of our linearized augmented DGA maps respecting these structures under Lagrangian cobordism. 

\begin{corollary}
    Let $L_1$ be Lagrangian isotopic to $L_2$, $\epsilon_1$ an augmentation for $\Lambda_1$, $\epsilon_1 \circ \Phi_{L_1} = \epsilon_1 \circ \Phi_{L_2}$, and the quadratics in the image of $\partial_{\epsilon_2}$ have no degree $-1$ Reeb chords. Then the induced maps of $((\Phi_{L_1}^{\epsilon_1})^\ell)^*$ and  $((\Phi_{L_2}^{\epsilon_1})^\ell)^*$ are equal on the higher order product structures. 
\end{corollary}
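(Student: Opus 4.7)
The plan is to combine Theorem~\ref{LinearizedEHK} and Theorem~\ref{AinftyTHM}. First I would verify that the hypotheses of the corollary are precisely those of part 2) of Theorem~\ref{LinearizedEHK}: Lagrangian isotopy of $L_1$ and $L_2$, the compatibility $\epsilon_1 \circ \Phi_{L_1} = \epsilon_1 \circ \Phi_{L_2}$, and the absence of degree $-1$ Reeb chords in the quadratic part of the image of $\partial_{\epsilon_2}$. Applying that theorem produces a chain homotopy between $(\Phi_{L_1}^{\epsilon_1})^\ell$ and $(\Phi_{L_2}^{\epsilon_1})^\ell$ as chain maps $(F_2, \partial_{\epsilon_2}^\ell) \to (F_1, \partial_{\epsilon_1}^\ell)$.

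Next I would dualize. The functor $\mathrm{Hom}(-,\Z)$ is exact on the finitely generated free $\Z$-modules that arise here and sends chain homotopies to cochain homotopies, so $((\Phi_{L_1}^{\epsilon_1})^\ell)^*$ and $((\Phi_{L_2}^{\epsilon_1})^\ell)^*$ are cochain homotopic as maps $(F_1^*, (\partial_{\epsilon_1}^\ell)^*) \to (F_2^*, (\partial_{\epsilon_2}^\ell)^*)$. In particular they induce equal maps on cohomology.

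Finally I would invoke Theorem~\ref{AinftyTHM} to identify $((\Phi_{L_i}^{\epsilon_1})^\ell)^*$ with the linear component $((\Phi_{L_i}^{\epsilon_1})^1)^*$ of an $A_\infty$ morphism from $(F_1^*, \{m_k^{\epsilon_1}\})$ to $(F_2^*, \{m_k^{\epsilon_2}\})$. The standard formalism then implies that each induced cohomology map is compatible with the higher order product operations described in Section~\ref{Ainftybackground}. Combined with the previous paragraph, the two maps send every higher order product structure to the same cohomology class, which is the claim.

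The main point to check carefully, rather than an obstacle per se, is that both $A_\infty$ morphisms have the same source and target: the $A_\infty$ structure $\{m_k^{\epsilon_2}\}$ on $F_2^*$ depends only on the augmentation $\epsilon_2 = \epsilon_1 \circ \Phi_{L_i}$, and the two choices of $L_i$ yield the same augmentation by hypothesis, so the two $A_\infty$ morphisms from Theorem~\ref{AinftyTHM} are genuinely morphisms between identical $A_\infty$ algebras. Apart from this bookkeeping, the proof is an assembly of the two theorems and requires no new calculation.
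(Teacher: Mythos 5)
Your proposal is correct and follows essentially the same route as the paper: apply part 2) of Theorem~\ref{LinearizedEHK} to obtain the chain homotopy, dualize it to a cochain homotopy, and use Theorem~\ref{AinftyTHM} together with the fact that the higher products live on (quotients of) cohomology to conclude the induced maps agree. Your added bookkeeping --- that both $A_\infty$ morphisms land in the same target structure because $\epsilon_1 \circ \Phi_{L_1} = \epsilon_1 \circ \Phi_{L_2}$ --- is a point the paper uses implicitly, and making it explicit is a small improvement rather than a departure.
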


Since Pan proved exact Lagrangian cobordism induces an $A_\infty$ morphism, Sabloff and Ma have shown the $A_\infty$ structures of its Legendrian ends can be equipped with a \dfn{weak relative Calabi-Yau} structure \cite{ma2025weakrelativecalabiyaustructures}, and Ng has shown the existence of an $L_\infty$ structure on Legendrian contact homology \cite{ng2025linfinitystructurelegendriancontact}. The authors hope to show in future work that $\Phi_L$ can be modified to preserve these structures as well.

Another useful algebraic object extracted from the DGA of a Legendrian knot is the \dfn{characteristic algebra} defined by Ng in \cite{NgCharAlgebra}. There it is shown that the characteristic algebra is a Legendrian isotopy class invariant. This invariant is used to distinguish Legendrian representatives of the same smooth knot type. It is easy to show that $\Phi_L$ also induces a well-defined map on the characteristic algebra. In future work, the authors hope to show that this induced map is also invariant under Lagrangian isotopy. 
\\
\\
\noindent {\bf Acknowledgements:} The authors would like to thank John Etnyre for his helpful conversations and advisement. We also thank Josh Sabloff and Lenny Ng for helpful discussions and pointing out references \cite{Pan, SabloffTraynor}. This work is partially supported by National Science Foundation grant DMS-2203312.
\section{Background}

We assume a reader is familiar with fundamental notions of contact and symplectic geometry, such as in \cite{Etnyre_05survey}, and the basics of Legendrian contact homology, such as in \cite{Etnyre_Ng_22survey}. We recall some of this material here for convenience and to establish notation. In Section~\ref{Sec:prelims}, we recall basic notions about Legendrian knots and the definition of an exact Lagrangian cobordism. In Section~\ref{Sec:CE_DGA}, we discuss the Chekanov-Eliashberg DGA. And finally, in Section~\ref{Sec:EHK}, we recall the map of Eckholm, Honda, and Kalman on the DGA induced from an exact Lagrangian cobordism.

\subsection{Preliminaries}
\label{Sec:prelims}
 
\begin{figure}[h]
    \centering
    \def\svgwidth{\linewidth} 
    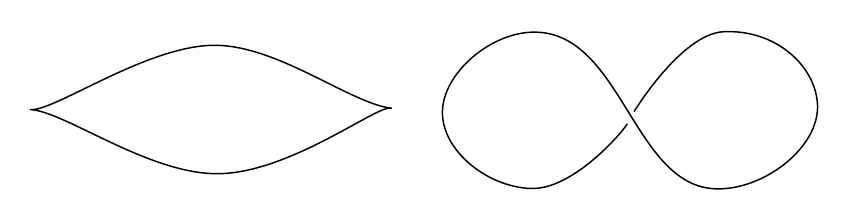
    \caption{The front projection of the max-tb unknot, left, and its Lagrangian projection, right.}
    \label{UnknotProjections}
\end{figure}

In $\R^3$, the standard contact structure $\xi_{std}$ is a 2-dimensional plane field given by the kernel of the 1-form $\alpha = dz - ydx$. This will be the only contact structure used in this paper. Legendrian knots in $(\R^3, \xi_{std})$ can be seen in two ways: the \dfn{front projection}, which projects the knot onto the $xz$-plane, that is $\Pi: \R^3 \rightarrow \R^2: (x,y,z)\mapsto (x,z)$, and the \dfn{Lagrangian projection}, which projects to the $xy$-plane, that is $\pi: \R^3 \rightarrow \R^2: (x,y,z)\mapsto (x,y)$. Legendrian knots in $(\R^3, \xi_{std})$ can be distinguished by two classical invariants, the \dfn{Thurston-Bennequin} ($tb$) and the \dfn{rotation number} ($r$). 
For more information on Legendrian knots and the computations of these classical invariants, we refer the reader to the following survey article \cite{Etnyre_05survey}. The Legendrian knots (and links) in this paper are closed and oriented unless otherwise stated.

An \dfn{exact symplectic manifold} is a symplectic manifold $(M^{2n}, \omega)$ equipped with  a 1-form $\lambda$ such that $\lambda = d\omega$.
An $n$-dimensional submanifold $L$ in $M$ is \dfn{Lagrangian} if $\omega|_L=0$. The Lagrangian is \dfn{exact} if it also holds that $\lambda|_L$ is an exact 1-form. One can consider \dfn{exact Lagrangian cobordisms}, a particularly interesting class of non-closed, exact Lagrangian submanifolds of $\left(\R \times \R^3, d(e^t\alpha)\right)$, where $t$ is the coordinate the first $\R$-factor. We call this pair the \dfn{symplectization} of $(\R^3, \xi_{std})$ and it is an exact symplectic manifold. The symplectization has cylindrical ends over Legendrians. Lagrangian cobordisms are assumed to be cobordism from Legendrian $\Lambda_1$ to Legendrian $\Lambda_2$, see Figure~\ref{sketch_cobordism}. Lagrangians are not as abundant as Legendrian knots, and the subset of exact Langrangian cobordisms are even less so. 

\begin{figure}[h!]
  \centering
   \begin{overpic}[width=3.5in]{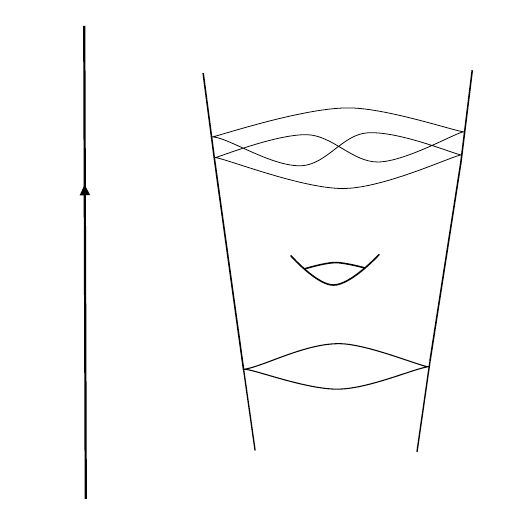}
   \put(50, 220){$t$}
    \put(250, 75){$\Lambda_1$}
    \put(250, 180){$\Lambda_2$}
   \end{overpic}
       \caption{A schematic of a Lagrangian cobordism from a Legendrian unknot to a Legendrian trefoil in the symplectization of $(\R^3, \xi_{std})$.}
       \label{sketch_cobordism}
\end{figure}

Formally, an exact Lagrangian cobordism $L$ from $\Lambda_1$ to $\Lambda_2$ is an embedded, orientable Lagrangian surface in $\left( \R \times \R^3, d \left( e^t \alpha \right) \right)$ such that for some $T \in \R^+$, the following hold:
\begin{itemize}
    \item $L \cap \left( [-T,T] \times \R^3\right)$ is compact
    \item $L \cap \left( \left(T, \infty \right) \times \R^3\right) = (T, \infty) \times \Lambda_2$
    \item $L \cap \left( \left(-\infty, -T \right) \times \R^3\right) = (-\infty, -T) \times \Lambda_1$, and 
    \item there exists a function $f: L \rightarrow \R$ which is constant on each end of $L$ whenever $df = \lambda|_L$.
\end{itemize}

Given a Lagrangian cobordism $L$ between two Legendrian knots $\Lambda_1$ and $\Lambda_2$, Chantraine \cite{Chantraine_10} shows that the difference in their Thurston-Bennequin numbers relates to the Euler characteristic of the cobordism and their rotations numbers are the same. That is, $$tb(\Lambda_2) - tb(\Lambda_1) = - \chi(L),$$ and $$r(\Lambda_1) = r(\Lambda_2).$$ In the case of a concordance, a genus-zero cobordism, the Euler characteristic $\chi(L) = 0$ implies that $tb(\Lambda_1)=tb(\Lambda_2)$. A reader who wishes to learn more about Lagrangian cobordisms is directed to \cite{Blackwell_legout_etal_21survey}.
%
\subsection{Chekanov-Eliashberg DGA}
\label{Sec:CE_DGA}
The classical invariants $tb$ and $r$ associated to Legendrian isotopy classes of Legendrian knots are used to distinguish some but not all of the knots. However, some non-classical invariants can distinguish them, including \dfn{Legendrian contact homology} (LCH), the homology of the Chekanov-Eliashberg differential graded algebra (DGA), and the \dfn{linearized contact homology}. The Chekanov-Eliashberg DGA was independently developed by Eliashberg \cite{Eliashberg1998} and Chekanov \cite{Chekanov_DGA_02}, who showed that the DGAs associated to $5_2$ and its mirror are not stable tame isomorphic and hence not Legendrian isotopic. Here, a stable tame isomorphism is a notion of equivalence between DGAs \cite{Chekanov_DGA_02}. We obscure the definition of stable tame isomorphism and instead highlight the fact that two Legendrian isotopic knots will have equivalent DGAs. The Chekanov-Eliashberg DGA of a Legendrian knot associates to every Legendrian knot $\Lambda \in (\R^3, \xi_{std})$ a differential graded algebra with generators the \dfn{Reeb chords}, or the trajectories of the Reeb vector field whose start and end points are on $\Lambda$ \cite{Chekanov_DGA_02}. 

Let $\Lambda$ be a Legendrian knot. The Chekanov-Eliashberg DGA, which we denote $\mathcal{A}_\Lambda$, was originally an algebra over $\Z/2\Z$ but now refers to its lifting to an algebra over the ring $\Z[t,t^{-1}]$ with grading over $\Z$ if $r(\Lambda) = 0$. The algebra is a free associated unital algebra generated by the finite set of Reeb chords $\mathcal{C}_\Lambda = \{a_1, a_2, \ldots, a_n \}$. That is, $\mathcal{A}_\Lambda = \Z[t,t^{-1}]\langle a_1, a_2, \ldots, a_n \rangle$. Up to $2r\left(\Lambda\right)$, the grading on $\mathcal{A}_\Lambda$ is defined for the generators. We describe it in the Lagrangian projection $\Pi(\Lambda)$, where the Reeb chords of $\Lambda$ are the double points in the projection. Let $a_i \in \mathcal{C}_\Lambda$ and $*$ a basepoint on $\Lambda$. Let $\lambda_i$ be a path from the overcrossing of $a_i$ to its undercrossing and missing $*$. $\Pi(\Lambda)$ can be perturbed such that the fractional number of counterclockwise rotations of the tangent vector to $\lambda_i$, denoted $r(\lambda_i)$, is a multiple of $\frac{1}{4}$. Then the grading on $a_i$ is $$|a_i| = 2r(\lambda_i) - \frac{1}{2}.$$ The gradings of $t$ and $t^{-1}$ are $-2r(\Lambda)$ and $2r(\lambda)$, respectively. The grading of a word in $\mathcal{A}_\Lambda$ is then the sum of the gradings of the generators in the word. 

For a complete combinatorial description of the differential $\partial$ using the Lagrangian projection, we refer the reader to \cite{Etnyre_Ng_22survey}. In this paper, we will use that it satisfies the Leibniz rule, i.e. for any two words $w$ and $w'$ in $\mathcal{A}_\Lambda$, $$\partial(ww') = \partial(w)w' + (-1)^{|w|}w \partial(w').$$

Due to the infinite dimensionality of the Chekanov-Eliashberg DGA, its homology may also be infinite dimensional. This hurdle, in addition to the noncommutativity of the algebra, makes computations difficult. To overcome this, Chekanov \cite{Chekanov_DGA_02} considers DGAs which have \dfn{augmentations} in order to work with a finite-dimensional linear complex whose homology, called linearized contact homology, is a finite-dimensional vector space.

The chain groups are free modules over the ring $\Z[t,t^{-1}]$ generated by the Reeb chords with the same grading used in the Chekanov-Eliashberg DGA. Let $\epsilon: (\mathcal{A}_\Lambda, \partial) \to (\Z, 0)$ be an augmentation. The map $\phi^\epsilon: \mathcal{A}_\Lambda \to \mathcal{A}_\Lambda$, called a \dfn{tame isomorphism}, is defined by its action on Reeb chords. Let $c \in \mathcal{C}_\Lambda$. Then  $$\phi^{\epsilon}(c) = c + \epsilon(c),$$ while fixing constants. Using tame isomorphisms we define the boundary of the complex, denoted $\partial_\epsilon^\ell$, to be the linear terms of $\partial_\epsilon = \phi^\epsilon \circ \partial \circ \phi^{-\epsilon}$. Given a Legendrian $\Lambda$ and an augmentation $\epsilon$, we denote the complex by $(F_\Lambda, \partial_\epsilon^\ell)$. The linearized contact homology, which we denote $LCH_*^\epsilon(\Lambda)$, is not itself an invariant. However, Chekanov proved the following theorem.
\begin{theorem}
    \label{ChekanovThm}
    (Chekanov \cite{Chekanov_DGA_02})
    The collection $\{LCH_*^\epsilon(\Lambda): \epsilon \text{ is an augmentation } (\mathcal{A}_\Lambda, \partial) \to (\Z, 0) \}$ is an invariant of $\Lambda$ up to Legendrian isotopy.
\end{theorem}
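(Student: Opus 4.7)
The plan is to deduce this invariance from the stable tame isomorphism invariance of the DGA $(\mathcal{A}_\Lambda,\partial)$ under Legendrian isotopy, which is recalled in the text. The strategy has three pieces: first, produce a pullback bijection on augmentations from a tame isomorphism of DGAs; second, show that under this bijection the corresponding linearized complexes are chain isomorphic; third, show that stabilization of the DGA changes each linearized complex only by an acyclic direct summand and hence preserves $LCH_*^\epsilon$.

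\textbf{Reduction to tame isomorphism.} Suppose $\Lambda$ and $\Lambda'$ are Legendrian isotopic. By stable tame isomorphism invariance of the DGA, the algebras $(\mathcal{A}_\Lambda,\partial)$ and $(\mathcal{A}_{\Lambda'},\partial')$ become tame isomorphic after finitely many stabilizations, each of which adjoins a pair of generators $e,f$ with $|e|=|f|+1$, $\partial e = f$ and $\partial f = 0$. An augmentation $\epsilon$ extends uniquely to the stabilized DGA by $\epsilon(e)=\epsilon(f)=0$: augmentations vanish off degree $0$, and $f=\partial e$ forces $\epsilon(f)=0$ in any case. In the linearized complex, the new generators form the acyclic two-term subcomplex $\Z\langle e\rangle \xrightarrow{\mathrm{id}} \Z\langle f\rangle$, so $LCH_*^\epsilon$ is unaffected. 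Hence it suffices to handle a genuine tame isomorphism $\Psi:(\mathcal{A}_\Lambda,\partial)\to(\mathcal{A}_{\Lambda'},\partial')$.

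\textbf{Pullback and linearized isomorphism.} For an augmentation $\epsilon'$ of $\mathcal{A}_{\Lambda'}$, set $\epsilon = \epsilon'\circ\Psi$; this is an augmentation of $\mathcal{A}_\Lambda$ because $\Psi$ is a chain map, and the assignment is a bijection with inverse given by $\Psi^{-1}$. Consider the tame algebra automorphism $\widetilde{\Psi} := \phi^{\epsilon'}\circ\Psi\circ\phi^{-\epsilon}$. A direct computation using $\partial'\Psi=\Psi\partial$ gives
\[
\partial_{\epsilon'}\widetilde{\Psi} \;=\; \phi^{\epsilon'}\partial'\phi^{-\epsilon'}\cdot\phi^{\epsilon'}\Psi\phi^{-\epsilon} \;=\; \phi^{\epsilon'}\Psi\partial\phi^{-\epsilon} \;=\; \widetilde{\Psi}\cdot\phi^{\epsilon}\partial\phi^{-\epsilon} \;=\; \widetilde{\Psi}\partial_\epsilon.
\]
Since each $\phi^{\pm\epsilon}$ acts as $c\mapsto c+\text{const}$ (linear part the identity), the linear part $\widetilde{\Psi}^\ell:F_\Lambda\to F_{\Lambda'}$ coincides with the linear part $\Psi^\ell$ of $\Psi$, which is an invertible linear map because each elementary automorphism in the tame factorization of $\Psi$ sends a generator $c_i$ to $c_i+u_i$ with $u_i$ independent of $c_i$, whose linear part is thus a shear $I+E_{ij}$. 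Taking linear terms in $\partial_{\epsilon'}\widetilde{\Psi}=\widetilde{\Psi}\partial_\epsilon$ yields $\partial_{\epsilon'}^\ell\circ\widetilde{\Psi}^\ell = \widetilde{\Psi}^\ell\circ\partial_\epsilon^\ell$, so $\widetilde{\Psi}^\ell$ is an isomorphism of chain complexes. This gives $LCH_*^\epsilon(\Lambda)\cong LCH_*^{\epsilon'}(\Lambda')$, and, combined with the bijection $\epsilon\leftrightarrow\epsilon'$, the equality of the two sets of homologies.

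\textbf{Main obstacle.} The most delicate step is verifying that the linear part of a tame isomorphism is invertible and that this survives conjugation by the translations $\phi^{\pm\epsilon}$. The required bookkeeping is the standard one: ordering the generators compatibly with the tame factorization, observing that elementary automorphisms contribute shears to the linear part, and noting that $\phi^{\pm\epsilon}$ affect only the constant terms. Once these points are in place, the passage from the chain-level identity $\partial_{\epsilon'}\widetilde{\Psi}=\widetilde{\Psi}\partial_\epsilon$ to the linearized identity, and the handling of stabilizations, are routine.
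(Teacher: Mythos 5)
The paper does not actually prove this statement---it is quoted from Chekanov---so I am measuring your sketch against the standard argument, which it follows in outline: stable tame isomorphism invariance of the DGA, pullback of augmentations, chain isomorphism of linearized complexes under a tame isomorphism, and acyclic summands from stabilizations. The structure is right, but the step you yourself single out as the main obstacle contains a genuine error: it is not true that $\widetilde{\Psi}^\ell$ coincides with $\Psi^\ell$. Conjugation by the translations $\phi^{\pm\epsilon}$ \emph{does} change the linear part of a nonlinear map. If, say, $\Psi(c)=c_1'+c_2'c_3'$, then $\Psi^\ell(c)=c_1'$, while
\[
\widetilde\Psi(c)=\phi^{\epsilon'}\bigl(\Psi(c)\bigr)-\epsilon(c)
\]
has linear part $c_1'+\epsilon'(c_3')\,c_2'+\epsilon'(c_2')\,c_3'$. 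In general $\widetilde\Psi^\ell$ is the ``derivative of $\Psi$ at the point determined by $\epsilon'$,'' not at the origin, so your argument that the linear part is invertible because elementary automorphisms contribute shears applies to $\Psi^\ell$ but not to $\widetilde\Psi^\ell$ --- and $\widetilde\Psi^\ell$ is the map that actually intertwines $\partial_{\epsilon'}^\ell$ and $\partial_\epsilon^\ell$.

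The gap is fixable by the same device the paper uses for $\Phi_L^{\epsilon_1}$: show that $\widetilde\Psi$ has no constant terms (the constant term of $\phi^{\epsilon'}\circ\Psi(c)$ is $\epsilon'(\Psi(c))=\epsilon(c)$, which is then subtracted off, exactly as in Lemma~\ref{NoConstants}), and likewise for $\widetilde{\Psi^{-1}}=\phi^{\epsilon}\circ\Psi^{-1}\circ\phi^{-\epsilon'}$. For maps without constant terms linearization is functorial, $(f\circ g)^\ell=f^\ell\circ g^\ell$, so $\widetilde\Psi^\ell\circ(\widetilde{\Psi^{-1}})^\ell=\mathrm{id}$ and $\widetilde\Psi^\ell$ is invertible; the same functoriality, together with the Leibniz argument of Lemma~\ref{Leibniz}, is also what justifies passing from $\partial_{\epsilon'}\widetilde\Psi=\widetilde\Psi\partial_\epsilon$ to the linearized identity, a step you currently assert without proof. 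A further small inaccuracy: the extension of $\epsilon$ over a stabilization is not unique when $|e|=0$ (any value of $\epsilon(e)$ is allowed), though every extension still produces the same acyclic two-term summand, so the set of linearized homologies is unaffected and your conclusion stands.
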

For more details of the construction of $LCH^\epsilon_*(\Lambda)$ see \cite{Etnyre_Ng_22survey}.

\subsection{Chekanov-Eliashberg DGA cobordism map} 
\label{Sec:EHK}
Given an exact Lagrangian cobordism $L$ between two Legendrians, Ekholm, Honda, and Kalman \cite{EckholmHondaKalman} introduced a DGA map on their Chekanov-Eliashberg DGAs. They further established the following result. 
\begin{theorem}
\label{EHK}
    (Ekholm-Honda-Kalman \cite[Theorems~1.3 and 1.4]{EckholmHondaKalman}) An exact Lagrangian cobordism $L$ from $\Lambda_1$ to $\Lambda_2$ induces a DGA map $\Phi_L: (\mathcal{A}_2, \partial_2) \rightarrow (\mathcal{A}_1, \partial_1)$ with the following properties: \\
    \indent 1) If $L = \Lambda \times \R$ is a trivial Lagrangian cylinder, then $\Phi_L = Id_{\mathcal{A}_\Lambda}$. \\
    \indent 2) If $L_1, L_2$ have the same ends $\Lambda_1$ and $\Lambda_2$ and are isotopic through exact Lagrangian cobordisms, then $\Phi_{L_1}$ and $\Phi_{L_2}$ are chain homotopic.\\
    \indent 3) If $L_1$ and $L_2$ are exact Lagrangian cobordisms from $\Lambda_0$ to $\Lambda_1$ and $\Lambda_1$ to $\Lambda_2$ respectively, then $\Phi_L$ and $\Phi_{L_1} \circ \Phi_{L_2}$ are chain homotopic.
\end{theorem}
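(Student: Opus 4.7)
The plan is to prove the theorem in the SFT-style framework of moduli spaces of pseudoholomorphic disks with boundary on $L$ in the symplectization $\R \times \R^3$. First I would define $\Phi_L$ on generators by counting, with signs and with powers of the group-ring variable $t$ recording the relative homology class, rigid pseudoholomorphic disks $u: D^2 \setminus \{p_0, p_1, \ldots, p_n\} \to \R \times \R^3$ with boundary on $L$, one positive puncture $p_0$ asymptotic at the positive end to a Reeb chord $a$ of $\Lambda_2$, and $n$ negative punctures $p_i$ asymptotic at the negative end to Reeb chords $b_i$ of $\Lambda_1$:
\[
\Phi_L(a) = \sum \# \mathcal{M}(a; b_1, \ldots, b_n)\, t^{\kappa}\, b_1 \cdots b_n,
\]
and then extend multiplicatively to all of $\mathcal{A}_2$.

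Next I would show that $\Phi_L$ is a chain map by examining the Gromov--SFT compactification of the 1-dimensional components of $\mathcal{M}(a; b_1, \ldots, b_n)$. Using exactness of $L$ to rule out bubbling of holomorphic spheres and closed disks, every codimension-one boundary stratum is a two-level holomorphic building: either a rigid disk in the symplectization $\R \times \Lambda_2$ attached at the positive puncture of a cobordism disk (contributing a factor of $\partial_2$), or a rigid disk in $\R \times \Lambda_1$ attached at one negative puncture of a cobordism disk (contributing a factor of $\partial_1$). Summing the signed contributions at both ends gives $\Phi_L \circ \partial_2 = \partial_1 \circ \Phi_L$.

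For property (1), with $L = \Lambda \times \R$ and the translation-invariant almost complex structure, a standard dimension count using the $\R$-action on the symplectization shows that the only rigid disks are trivial strips over Reeb chords of $\Lambda$, hence $\Phi_L = \mathrm{Id}_{\mathcal{A}_\Lambda}$. For property (2), given a 1-parameter family $\{L_s\}_{s \in [0,1]}$ of exact Lagrangian cobordisms interpolating from $L_1$ to $L_2$, I would assemble the parametrized moduli space $\bigcup_s \mathcal{M}(L_s)$ and define a degree $+1$ map $K$ by counting its zero-dimensional sections; analyzing the ends of its 1-dimensional components produces the chain homotopy identity $\Phi_{L_1} - \Phi_{L_2} = \partial_1 \circ K + K \circ \partial_2$. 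For property (3), I would use a neck-stretching argument along the common Legendrian $\Lambda_1$: as the neck length tends to infinity, SFT compactness forces each rigid disk for the concatenated cobordism $L$ to split into a rigid disk in $L_2$ with negative punctures on Reeb chords of $\Lambda_1$, each then absorbed by a rigid disk in $L_1$, directly producing $\Phi_{L_1} \circ \Phi_{L_2}$, while the 1-parameter family of neck lengths supplies the chain homotopy between this limit and $\Phi_L$.

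The main obstacle is analytical rather than algebraic: establishing transversality of the relevant moduli spaces (which typically requires domain-dependent or abstract perturbations of the almost complex structure, since translation-invariance on the cylindrical ends severely restricts admissible perturbations), ruling out sphere and disk bubbling in the SFT compactification (where exactness of $L$ provides the essential a priori energy bounds and excludes disks with no punctures by a Stokes argument), and setting up coherent orientations compatible with the $\Z[t,t^{-1}]$-coefficients so that the signed boundary contributions cancel correctly. Property (3) carries the additional subtlety of verifying that the neck-stretched moduli spaces converge only to the expected broken configurations, with no limiting components escaping into the neck region and carrying excess positive punctures.
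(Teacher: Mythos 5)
This theorem is quoted in the paper as background, with the proof deferred entirely to Ekholm--Honda--Kalman; the paper supplies no argument of its own, and your sketch correctly reproduces the strategy of that cited source (moduli counts of rigid disks with boundary on $L$, SFT breaking for the chain-map identity, $\R$-invariance for the trivial cylinder, parametrized moduli spaces for isotopy invariance, and neck-stretching for composition). One imprecision worth fixing: in part (2) the homotopy identity is not $\Phi_{L_1}-\Phi_{L_2}=\partial_1\circ K+K\circ\partial_2$ with $K$ the generator-level count, but rather $\Phi_{L_1}-\Phi_{L_2}=\partial_1\circ\Omega+\Omega\circ\partial_2$, where $\Omega$ is the $(\Phi_{L_1},\Phi_{L_2})$-bimodule derivation extending $K$, i.e.\ $\Omega(c_1\cdots c_m)=\sum_j\Phi_{L_1}(c_1\cdots c_{j-1})K(c_j)\Phi_{L_2}(c_{j+1}\cdots c_m)$; since $\partial_2$ of a generator is a sum of words, $K\circ\partial_2$ is not even defined without this extension, and the precise bimodule form is exactly what the paper's Section~3 relies on when linearizing the homotopy.
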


\subsection{The $A_\infty$ structure}
\label{Ainftybackground}
Let $V$ be a graded vector space over $\Z/2\Z$. An \dfn{$A_\infty$ algebra} on $V$ is an infinite sequence of degree 1 graded maps $\{m_k: V^{\otimes k} \rightarrow V\}_{1 \leq k}$ that satisfy:

$$\sum_{i+j+k=l} m_{i+1+k} \circ (1^{\otimes i} \otimes m_j \otimes 1^{\otimes k}) = 0$$
for all $1 \leq l \leq n$. The case $l=1$ yields the equation $m_1 \circ m_1 = 0$, so $m_1$ is a codifferential on $V$. The case $l=2$ gives the equation
$$m_1 \circ m_2 (a,b) = m_2( m_1(a),b) + m_2(a, m_1(b)).$$
So $m_2$ descends to a well-defined product on $H^*(V)$. A full $A_\infty$ structure cannot be defined on $H^*(V)$ for the maps $m_k$, $k \geq 3$, by simply letting $m_k$ descend to the cohomology. Instead the higher order $(k \geq 3)$ products are inductively defined on quotients of the cohomology. See  \cite{CivanEtnyreKoprowskiSabloffWalker} for details.
It was shown in \cite{CivanEtnyreKoprowskiSabloffWalker} that there is an $A_\infty$ algebra on $(F_\Lambda^*,(\partial_\epsilon^\ell)^*)$.

\begin{theorem}
(Civan-Etnyre-Koprowski-Sabloff-Walker  \cite{CivanEtnyreKoprowskiSabloffWalker})
    For each augmentation $\epsilon$, the Legendrian contact homology DGA $(\mathcal{A}_\Lambda, \partial_\Lambda)$ induces an $A_\infty$ structure on the linearized cochain complex $(F_\Lambda^*, (\partial_\epsilon^\ell)^*)$.
\end{theorem}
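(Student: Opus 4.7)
The plan is to dualize the augmented differential $\partial_\epsilon$, viewed as a derivation on the tensor algebra $\mathcal{A}_\Lambda$, into a sequence of operations on $F_\Lambda^*$, and then extract the $A_\infty$ relations from the identity $\partial_\epsilon^2 = 0$ by the Leibniz rule. This is the standard duality between codifferentials on tensor coalgebras and $A_\infty$ structures on the generating space, so the work is mainly to unwind the DGA side carefully enough that the combinatorics line up with the relations recorded in Section~\ref{Ainftybackground}.

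First, after conjugating by the tame isomorphism $\phi^\epsilon$, the shifted augmentation $\epsilon \circ \phi^{-\epsilon}$ annihilates every Reeb chord, so $\partial_\epsilon$ carries each generator $c \in \mathcal{C}_\Lambda$ into the augmentation ideal. Splitting by word length I would write
$$\partial_\epsilon(c) \;=\; \sum_{k \geq 1} \partial_\epsilon^{(k)}(c), \qquad \partial_\epsilon^{(k)} \colon F_\Lambda \longrightarrow F_\Lambda^{\otimes k},$$
so that $\partial_\epsilon^{(1)} = \partial_\epsilon^\ell$. Dualizing, the (graded) finite-dimensionality of $F_\Lambda$ supplies canonical isomorphisms $(F_\Lambda^{\otimes k})^* \cong (F_\Lambda^*)^{\otimes k}$, and I set
$$m_k \;=\; (\partial_\epsilon^{(k)})^* \colon (F_\Lambda^*)^{\otimes k} \longrightarrow F_\Lambda^*.$$
By construction $m_1 = (\partial_\epsilon^\ell)^*$ is the cochain differential.

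The heart of the argument is the verification of the $A_\infty$ relations. Since $\partial_\epsilon$ is a derivation, its action on any word $c_{i_1}\cdots c_{i_l}$ is determined by Leibniz, hitting one letter at a time and splicing in a word in its place. Starting from $\partial_\epsilon^{(l)}(c)$ and applying $\partial_\epsilon$ once more, an insertion at position $i+1$ (with $i$ letters to the left and $k = l-i-1$ to the right) of a word of length $j$ produces an output word of length $i+j+k$. Collecting contributions to $\partial_\epsilon^2(c) = 0$ by output word length $n$ therefore gives
$$\sum_{i+j+k = n}\, \bigl(1^{\otimes i} \otimes \partial_\epsilon^{(j)} \otimes 1^{\otimes k}\bigr) \circ \partial_\epsilon^{(i+1+k)} \;=\; 0$$
on $F_\Lambda$ for every $n \geq 1$. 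Taking duals and using $(F_\Lambda^{\otimes m})^* \cong (F_\Lambda^*)^{\otimes m}$ to reverse the order of composition yields the $A_\infty$ relation
$$\sum_{i+j+k = n}\, m_{i+1+k} \circ \bigl(1^{\otimes i} \otimes m_j \otimes 1^{\otimes k}\bigr) \;=\; 0$$
on $F_\Lambda^*$, which is exactly the defining identity in Section~\ref{Ainftybackground}.

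The step I expect to be the main obstacle is bookkeeping rather than conceptual: one must install the conventional $A_\infty$ degree shift on $F_\Lambda^*$ so that each $m_k$ lands in the degree prescribed in Section~\ref{Ainftybackground}, and keep track of Koszul signs produced both by the Leibniz expansion and by transposing factors during dualization. Over $\Z/2\Z$ these signs disappear and the argument reduces to the purely combinatorial count above; over $\Z$ one has to be uniform about the sign convention chosen for $\partial_\epsilon$, but no new ideas are required.
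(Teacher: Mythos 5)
Your proposal is correct and is essentially the standard construction from \cite{CivanEtnyreKoprowskiSabloffWalker}, which the paper only quotes without reproving: decompose the augmented differential $\partial_\epsilon$ by word length (possible because $\epsilon\circ\partial=0$ kills the constant terms), dualize each component $\partial_\epsilon^{(k)}$ to get $m_k$, and extract the $A_\infty$ relations from $\partial_\epsilon^2=0$ together with the Leibniz rule. Since the paper works over $\Z/2\Z$ in this section, the sign and degree-shift bookkeeping you flag as the main obstacle is indeed harmless here.
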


Let $(V, \{m_k: V^{\otimes k} \rightarrow V\}_{1 \leq k})$ and $(W, \{n_k: W^{\otimes k} \rightarrow W\}_{1 \leq k})$ be $A_\infty$ algebras. An \dfn{$A_\infty$ morphism} is an infinite collection of degree 0 maps $\{\phi_n: V^{\otimes n} \rightarrow W\}_{n \geq 1}$ that satisfy

$$\sum_{i+j+k=n} \phi_{i+1+k} \circ (1^{\otimes i} \otimes m_j \otimes 1^{\otimes k}) = \sum_{r=1}^n \sum_{i_1+\ldots + i_r} n_r \circ (\phi_{i_1} \otimes \cdots \otimes \phi_{i_r})$$
for all $n \geq 1$. A consequence of  $\{\phi_n: V^{\otimes n} \rightarrow W\}_{n \geq 1}$ being an $A_\infty$ morphism is that $\phi_1$ preserves the product structures, i.e. $$\phi_1 \circ m_k([c_{i_1}], \ldots, [c_{i_k}]) = n_k(\phi_1[c_{i_1}], \ldots, \phi_1[c_{i_k}])$$ on the level of cohomology.


\section{Linearizing the cobordism map}\label{Sec:cobordism_map}
In this section, we show how to obtain a map on the linearized contact homology of Legendrian knots from an exact Lagrangian cobordism. We then explore the properties of this map when it is a trivial cylinder or altered through exact Lagrangian cobordisms by an isotopy. 

Let $L$ be an exact Lagrangian cobordism from $\Lambda_1$ to $\Lambda_2$, $\Phi_L$ its induced DGA map. Let  $\epsilon_1: (\mathcal{A}_{\Lambda_1}, \partial_1) \rightarrow (\Z, 0)$ be an augmentation and $\epsilon_2 = \epsilon_1 \circ \Phi_L$. We define the \dfn{augmented DGA map}, $\Phi_L^{\epsilon_1}: (\mathcal{A}_{\Lambda_2}, \partial_{\epsilon_2}) \rightarrow (\mathcal{A}_{\Lambda_1}, \partial_{\epsilon_1})$, to be $\Phi_L^{\epsilon_1} = \phi^{\epsilon_1}  \circ \Phi_L \circ \phi^{-\epsilon_2}$. Recall $\phi^{\epsilon_i}: \mathcal{A}_{\Lambda_i} \rightarrow \mathcal{A}_{\Lambda_i}$ was defined in Section~\ref{Sec:CE_DGA}. We first show that $\Phi_L^{\epsilon_1}$ is a chain map from $(\mathcal{A}_{\Lambda_2}, \partial_{\epsilon_2})$ to $(\mathcal{A}_{\Lambda_1}, \partial_{\epsilon_1})$, where $\partial_{\epsilon_i} = \phi^{\epsilon_i} \circ \partial \circ \phi^{-\epsilon_i}, i=1,2$, which we refer to as the \dfn{augmented differentials}.
 
\begin{lemma}
\label{AugCommute}
Let $L$ be an exact Lagrangian cobordism from $\Lambda_1$ to $\Lambda_2$. Then
$\Phi_L^{\epsilon_1} \circ \partial_{\epsilon_2} = \partial_{\epsilon_1} \circ \Phi_L^{\epsilon_1}$.
\end{lemma}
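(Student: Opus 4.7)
The plan is to prove this by a direct algebraic unpacking of the definitions, using three ingredients: (i) the fact that $\phi^{\epsilon_i}$ and $\phi^{-\epsilon_i}$ are mutually inverse algebra automorphisms of $\mathcal{A}_{\Lambda_i}$, (ii) the Ekholm--Honda--Kalman chain map property $\Phi_L \circ \partial_2 = \partial_1 \circ \Phi_L$ from Theorem~\ref{EHK}, and (iii) the defining relations $\Phi_L^{\epsilon_1} = \phi^{\epsilon_1} \circ \Phi_L \circ \phi^{-\epsilon_2}$ and $\partial_{\epsilon_i} = \phi^{\epsilon_i} \circ \partial_i \circ \phi^{-\epsilon_i}$.

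First I would verify the auxiliary claim that $\phi^{\epsilon} \circ \phi^{-\epsilon} = \mathrm{id}$ on $\mathcal{A}_\Lambda$; this reduces to checking on generators, where $\phi^{\epsilon}(\phi^{-\epsilon}(c)) = \phi^{\epsilon}(c - \epsilon(c)) = c + \epsilon(c) - \epsilon(c) = c$, since both maps are unital algebra homomorphisms and $\epsilon(c) \in \Z$ is fixed by each. With this in hand, the lemma becomes a short compositional computation: starting from $\Phi_L^{\epsilon_1} \circ \partial_{\epsilon_2} = \phi^{\epsilon_1} \circ \Phi_L \circ \phi^{-\epsilon_2} \circ \phi^{\epsilon_2} \circ \partial_2 \circ \phi^{-\epsilon_2}$, the middle pair of $\phi^{\pm \epsilon_2}$ cancels, then the EHK chain map property moves $\partial$ past $\Phi_L$, and finally inserting $\mathrm{id} = \phi^{-\epsilon_1} \circ \phi^{\epsilon_1}$ between $\partial_1$ and $\Phi_L$ regroups the expression as $\partial_{\epsilon_1} \circ \Phi_L^{\epsilon_1}$.

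The only nontrivial conceptual point along the way is justifying that the EHK chain map property $\Phi_L \circ \partial_2 = \partial_1 \circ \Phi_L$ is genuinely available here; this is granted by Theorem~\ref{EHK}, so there is no real obstacle. I would not need to invoke the specific combinatorial form of $\Phi_L$ (counts of holomorphic disks with boundary on $L$) nor the hypothesis $\epsilon_2 = \epsilon_1 \circ \Phi_L$ for this particular lemma; the augmentation compatibility condition will be needed later to ensure that $\epsilon_2$ is actually an augmentation of $(\mathcal{A}_{\Lambda_2},\partial_2)$ and hence that $\partial_{\epsilon_2}$ has no constant term, but it plays no role in the chain-level identity itself.

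In summary, the proof is essentially a one-line diagram chase that is obtained by formally conjugating the EHK identity by the tame isomorphisms $\phi^{\epsilon_1}$ and $\phi^{-\epsilon_2}$. The argument is fully formal and algebraic; no geometric input beyond Theorem~\ref{EHK} is required.
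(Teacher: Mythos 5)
Your proposal is correct and is essentially identical to the paper's proof: cancel the middle $\phi^{-\epsilon_2}\circ\phi^{\epsilon_2}$, apply the Ekholm--Honda--Kalman chain map identity from Theorem~\ref{EHK}, insert $\phi^{-\epsilon_1}\circ\phi^{\epsilon_1}$, and regroup. The only difference is that you explicitly verify $\phi^{\epsilon}\circ\phi^{-\epsilon}=\mathrm{id}$ on generators, which the paper takes for granted; that is a harmless (and mildly welcome) addition.
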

\begin{proof}
A computation gives
    \begin{equation*}
        \begin{split}
            \Phi_L^{\epsilon_1} \circ \partial_{\epsilon_2} &= \phi^{\epsilon_1}  \circ \Phi_L \circ \phi^{-\epsilon_2} \circ \phi^{\epsilon_2} \circ \partial_2 \circ \phi^{-\epsilon_2}  \\
            &= \phi^{\epsilon_1} \circ \Phi_L \circ \partial_2 \circ \phi^{-\epsilon_2} \\
            &= \phi^{\epsilon_1} \circ \partial_1 \circ \Phi_L \circ \phi^{-\epsilon_2} \\
            &= \phi^{\epsilon_1} \circ \partial_1 \circ \phi^{-\epsilon_1} \circ \phi^{\epsilon_1} \circ \Phi_L \circ \phi^{-\epsilon_2} \\
            &= \partial_{\epsilon_1} \circ \Phi_L^{\epsilon_1},
        \end{split}
    \end{equation*}
    where the third equality follows from Theorem$~\ref{EHK}$
    Thus we see that $\Phi_L^{\epsilon_1}$ commutes with the augmented differentials.
\end{proof}
In order to 'linearize' the equation in Lemma~\ref{AugCommute}, we will need the following two lemmas. The first shows that $\Phi_L^{\epsilon_1}$ has no constant terms.
\begin{lemma}
\label{NoConstants}
Let $c$ be a Reeb chord. Then $\Phi_L^{\epsilon_1}(c)$ has no constant terms.
\end{lemma}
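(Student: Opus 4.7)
The plan is to unwind the definition $\Phi_L^{\epsilon_1} = \phi^{\epsilon_1} \circ \Phi_L \circ \phi^{-\epsilon_2}$ and track the constant term at each step, exploiting that $\epsilon_2$ was defined precisely as $\epsilon_1 \circ \Phi_L$ so that the constants line up to cancel. First, I would apply $\phi^{-\epsilon_2}$ to the Reeb chord $c$: since $\phi^{-\epsilon_2}$ is the tame automorphism sending each Reeb chord $a$ of $\Lambda_2$ to $a - \epsilon_2(a)$ and fixing constants, we get $\phi^{-\epsilon_2}(c) = c - \epsilon_2(c) = c - \epsilon_1(\Phi_L(c))$.

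Next, I would apply $\Phi_L$. Because $\Phi_L$ is a unital $\Z$-algebra map, it sends the constant $\epsilon_1(\Phi_L(c)) \in \Z$ to itself, yielding
\[
\Phi_L \circ \phi^{-\epsilon_2}(c) \;=\; \Phi_L(c) - \epsilon_1(\Phi_L(c)).
\]

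The key step is then to compute the constant term of $\phi^{\epsilon_1}(\Phi_L(c))$. For any word $w = a_{i_1} a_{i_2} \cdots a_{i_n}$ in Reeb chords of $\Lambda_1$, multiplicativity of $\phi^{\epsilon_1}$ gives
\[
\phi^{\epsilon_1}(w) \;=\; (a_{i_1} + \epsilon_1(a_{i_1}))(a_{i_2} + \epsilon_1(a_{i_2})) \cdots (a_{i_n} + \epsilon_1(a_{i_n})),
\]
whose constant term (the coefficient of $1$ after expanding) is $\prod_{j} \epsilon_1(a_{i_j}) = \epsilon_1(w)$. Extending $\Z$-linearly, the constant term of $\phi^{\epsilon_1}(x)$ equals $\epsilon_1(x)$ for every $x \in \mathcal{A}_{\Lambda_1}$. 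In particular the constant term of $\phi^{\epsilon_1}(\Phi_L(c))$ is $\epsilon_1(\Phi_L(c))$.

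Combining these steps, $\Phi_L^{\epsilon_1}(c) = \phi^{\epsilon_1}(\Phi_L(c)) - \epsilon_1(\Phi_L(c))$, and its constant term is $\epsilon_1(\Phi_L(c)) - \epsilon_1(\Phi_L(c)) = 0$. There is no real obstacle here: the lemma is essentially a bookkeeping identity built into the definition $\epsilon_2 = \epsilon_1 \circ \Phi_L$, and the only thing to check carefully is the elementary multinomial expansion identifying the constant term of $\phi^{\epsilon_1}(w)$ with $\epsilon_1(w)$.
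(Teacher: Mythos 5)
Your proof is correct and follows essentially the same route as the paper's: both arguments identify the constant term of $\phi^{\epsilon_1}(\Phi_L(c))$ as $\epsilon_1(\Phi_L(c))$ by expanding $\phi^{\epsilon_1}$ on each word and using that $\epsilon_1$ is multiplicative, and both then use $\epsilon_2 = \epsilon_1 \circ \Phi_L$ to see that this constant is exactly cancelled by the $-\epsilon_2(c)$ introduced by $\phi^{-\epsilon_2}$. No substantive differences.
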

\begin{proof} Let $\mathcal{C}_{\Lambda_1} = \{c_1, \ldots,c_m\}$ be the Reeb chords of $\Lambda_1$, $c$ a Reeb chord of $\Lambda_2$. We start by showing the constant terms of 
$\phi^{\epsilon_1} \circ \Phi_L(c)$ are $\epsilon_1 \circ \Phi_L(c)$. There are constants $a_0$, $a_{i_1, \ldots, i_j} \in \Z$ for $j \geq 1$ and $i_k \in \{1, \ldots, n\}$, with only finitely many being nonzero such that $$\Phi_L(c) = a_0 + \sum_{j=1}^\infty \left(\sum_{i_1, \ldots, i_j} a_{i_1, \ldots, i_j} c_{i_1} \cdots c_{i_j} \right).$$ Then $$\phi^{\epsilon_1} \circ \Phi_L(c) = a_0 + \sum_{j=1}^\infty \left(\sum_{i_1, \ldots, i_j} a_{i_1, \ldots, i_j} (c_{i_1}+\epsilon(c_{i_1})) \cdots (c_{i_j}+ \epsilon_1(c_{i_j})) \right).$$ The constant terms of this sum are $$\phi^{\epsilon_1} \circ \Phi_L(c) = a_0 + \sum_{j=1}^\infty \left(\sum_{i_1, \ldots, i_j} a_{i_1, \ldots, i_j}\epsilon(c_{i_1}) \cdots \epsilon_1(c_{i_j}) \right) = \epsilon_1 \circ \Phi_L(c).$$ \\
\indent Now we show $\Phi_L^{\epsilon_1}(c)$ has no constant terms. We have \begin{equation*}
\begin{split} 
\Phi_L^{\epsilon_1}(c) &= \phi^{\epsilon_1} \circ \Phi_L \circ \phi^{-\epsilon_2}(c) \\ &=  \phi^{\epsilon_1} \circ \Phi_L(c - \epsilon_2(c)) \\ &= \phi^{\epsilon_1} (\Phi_L(c) - \Phi_L(\epsilon_2(c))) \\ &= \phi^{\epsilon_1}( \Phi_L(c) - (\epsilon_2(c))) \\ &= \phi^{\epsilon_1} (\Phi_L(c) - (\epsilon_1 \circ \Phi_L(c))) \\ &= \phi^{\epsilon_1} \circ \Phi_L(c) - \epsilon_1 \circ \Phi_L(c).
\end{split}
\end{equation*}
As shown in the previous paragraph, the constant terms of $\phi^{\epsilon_1} \circ \Phi_L(c)$ are exactly $\epsilon_1 \circ \Phi_L(c).$

\end{proof}

It is well-known that the augmented differentials satisfy the Leibniz rule. We provide a proof of this here as we will need it below, and it does not appear in the literature.

\begin{lemma}
\label{Leibniz}
    The augmented boundary map $\partial_{\epsilon}$ satisfies the Leibniz rule, that is $$\partial_\epsilon(c_1 \cdots c_m) = \partial_\epsilon(c_1)c_2 \cdots c_m + (-1)^{|c_1|}c_1\partial_\epsilon(c_2)c_3 \cdots c_m + \ldots + (-1)^{|c_1 \cdots c_{m-1}|}c_1 \cdots c_{m-1}\partial_\epsilon (c_m).$$
\end{lemma}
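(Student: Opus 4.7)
The plan is to reduce to the Leibniz rule for the original differential $\partial$ by exploiting the fact that $\phi^{\pm\epsilon}$ are graded algebra automorphisms of $\mathcal{A}_\Lambda$.

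First I would verify that $\phi^{\epsilon}$ is a graded algebra homomorphism with inverse $\phi^{-\epsilon}$. Because an augmentation $\epsilon \colon (\mathcal{A}_\Lambda,\partial)\to (\Z,0)$ is a degree $0$ chain map into a ring concentrated in degree $0$, we have $\epsilon(c)=0$ whenever $|c|\neq 0$. Hence $\phi^{\epsilon}(c) = c + \epsilon(c)$ is homogeneous of degree $|c|$, and by extending multiplicatively the map $\phi^{\epsilon}$ is a degree-preserving algebra automorphism. In particular, for any word $w,w'\in \mathcal{A}_\Lambda$,
\[
\phi^{\pm\epsilon}(ww') = \phi^{\pm\epsilon}(w)\,\phi^{\pm\epsilon}(w'),\qquad |\phi^{\pm\epsilon}(w)| = |w|.
\]

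Second, I would prove the two-factor case $\partial_\epsilon(c_1 c_2) = \partial_\epsilon(c_1)c_2 + (-1)^{|c_1|}c_1\partial_\epsilon(c_2)$ by a direct unwrapping of the definition $\partial_\epsilon = \phi^{\epsilon}\circ \partial\circ \phi^{-\epsilon}$. Applying $\phi^{-\epsilon}$ turns $c_1 c_2$ into $\phi^{-\epsilon}(c_1)\phi^{-\epsilon}(c_2)$; then the Leibniz rule for $\partial$ splits this into two terms with a sign $(-1)^{|\phi^{-\epsilon}(c_1)|}=(-1)^{|c_1|}$; then applying $\phi^{\epsilon}$ distributes over the product and cancels the inner $\phi^{-\epsilon}$ factors, leaving exactly $\partial_\epsilon(c_1)c_2 + (-1)^{|c_1|}c_1\partial_\epsilon(c_2)$.

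Finally, I would induct on $m$. Writing $c_1\cdots c_m = c_1\cdot(c_2\cdots c_m)$ and applying the two-factor case together with the inductive hypothesis (noting $|c_1\cdots c_k| = |c_1|+\cdots+|c_k|$) yields the stated Leibniz identity. The only thing to be careful about is the sign bookkeeping and the observation that $\phi^{\pm\epsilon}$ preserves gradings — once that is in place, the argument is purely formal, so I do not anticipate a genuine obstacle beyond making these steps explicit.
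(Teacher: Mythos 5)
Your proposal is correct and follows essentially the same route as the paper's proof: both reduce to the Leibniz rule for $\partial$ by conjugating with $\phi^{\pm\epsilon}$, using that $\epsilon$ vanishes on Reeb chords of nonzero degree so that $\phi^{\pm\epsilon}$ preserve gradings, and both close the induction by peeling off $c_1$. Your version is slightly more streamlined because you invoke the multiplicativity of $\phi^{\pm\epsilon}$ as graded algebra automorphisms up front, where the paper instead expands $\phi^{-\epsilon}(c_2\cdots c_m)$ into a sum of words and applies the Leibniz rule term by term, but this is a presentational difference rather than a different argument.
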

\begin{proof}
We first show $$\partial(\phi^{-\epsilon}(c_1 \cdots c_m)) = \\ \sum_{i=1}^m (-1)^{|c_1 \cdots c_{i-1}|}\phi^{-\epsilon}(c_1 \cdots c_{i-1})\partial(\phi^{-\epsilon}(c_i))\phi^{-\epsilon}(c_{i+1} \cdots c_m)$$ 
by induction. Assume true for $m-1$. Then for a word of length $m$ we have 
\begin{equation*}
    \begin{split}
\partial(\phi^{-\epsilon}(c_1 \cdots c_m)) &= \partial((c_1 - \epsilon_1(c_1))\phi^{-\epsilon}(c_2 \cdots c_m))
\\ &= \partial(c_1\phi^{-\epsilon}(c_2 \cdots c_m)) - \epsilon(c_1) \partial(\phi^{-\epsilon}(c_2 \cdots c_m)).  
\end{split}
\end{equation*}
Let $\sum_{i=1}^n w_i$ be the sum of words of length at most $m-1$ after expanding $\phi^{-\epsilon}(c_2 \cdots c_m)$. Then 
\begin{equation*}
    \begin{split}
\partial(c_1\phi^{-\epsilon}(c_2 \cdots c_m)) &= \partial(c_1\sum_{i=1}^n w_i) \\ &= \sum_{i=1}^n \partial (c_1 w_i) \\ &= \sum_{i=1}^n \partial (c_1)w_i + (-1)^{|c_1|}c_1 \partial(w_i) \\ &= \partial(c_1)\sum_{i=1}^n w_i + (-1)^{|c_1|}c_1\partial(\sum_{i=1}^n w_i).
\end{split}
\end{equation*}
Thus, 
\begin{equation*}
    \begin{split}
& \partial(c_1\phi^{-\epsilon}(c_2 \cdots c_m)) - \epsilon(c_1) \partial(\phi^{-\epsilon}(c_2 \cdots c_m)) \\ &= \partial(c_1)\phi^{-\epsilon}(c_2 \cdots c_m) + (-1)^{|c_1|}c_1\partial(\phi^{-\epsilon}(c_2 \cdots c_m))
 - \epsilon(c_1) \partial(\phi^{-\epsilon}(c_2 \cdots c_m)) \\ &= \partial(\phi^{-\epsilon}(c_1))\phi^{-\epsilon}(c_2 \cdots c_m) + (-1)^{|c_1|}\phi^{-\epsilon}(c_1)\partial(\phi^{-\epsilon}(c_2 \cdots c_m)),
\end{split}
\end{equation*}
where $(-1)^{|c_1|}c_1-\epsilon(c_1) = (-1)^{|c_1|}\phi^{-\epsilon}(c_1)$ because $\epsilon$ vanishes on nonzero graded Reeb chords.
Since we assumed true for $m-1$, the last term becomes $$\sum_{i=2}^m (-1)^{|c_1 \cdots c_{i-1}|}\phi^{-\epsilon}(c_1 \cdots c_{i-1})\partial(\phi^{-\epsilon}(c_i))\phi^{-\epsilon}(c_{i+1} \cdots c_m),$$ and so we get the desired equality.
\\
\indent Now we prove the lemma. We compute that
\begin{equation*}
    \begin{split}
\partial_\epsilon(c_1...c_m) &= \phi^\epsilon \circ \partial \circ \phi^{-\epsilon}(c_1...c_m) \\ &= \phi^\epsilon \circ (\sum_{i=1}^m (-1)^{|c_1 \cdots c_{i-1}|}\phi^{-\epsilon}(c_1 \cdots c_{i-1})\partial(\phi^{-\epsilon}(c_i))\phi^{-\epsilon}(c_{i+1} \cdots c_m) \\ &= \sum_{i=1}^m (-1)^{|c_1 \cdots c_{i-1}|} \phi^{\epsilon} \circ \phi^{-\epsilon} (c_1 \cdots c_{i-1}) \partial_{\epsilon}(c_i) \phi^{\epsilon} \circ \phi^{-\epsilon} (c_{i+1} \cdots c_m) \\ &= \partial^\epsilon(c_1)c_2 \cdots c_m + (-1)^{|c_1|}c_1\partial_\epsilon(c_2)c_3 \cdots c_m + \ldots + (-1)^{|c_1 \cdots c_{m-1}|}c_1 \cdots c_{m-1}\partial_\epsilon(c_m)
\end{split}
\end{equation*}
\end{proof}
We now consider a linearized version of $\Phi_L^{\epsilon_1}$. Let $(F_i, \partial_{\epsilon_i}^{\ell})$ denote the linearized chain groups as defined in Section~\ref{Sec:CE_DGA}. We define the \dfn{linearized augmented DGA map} to be $$(\Phi_L^{\epsilon_1})^\ell: (F_2, \partial_{\epsilon_2}^\ell) \rightarrow (F_1, \partial_{\epsilon_1}^\ell),$$ where $(\Phi_L^{\epsilon_1})^\ell(c)$ for a generator $c$ of $F_2$ is the linear terms in $\Phi_L^{\epsilon_1}(c)$. Now we show that $(\Phi_L^{\epsilon_1})^\ell$ is a DGA map.

\begin{proof}[Proof of Proposition~\ref{DGA}] Let $c$ be a Reeb chord in $\Lambda_2$. We begin by observing that the linear part of $\Phi_L^{\epsilon_1}(\partial_{\epsilon_2}(c))$ is the linear part of the image of the linear part of $\partial_{\epsilon_2}(c)$ under $\Phi_L^{\epsilon_1}$. Let $\mathcal{C}_{\Lambda_1} = \{c_1, \ldots,c_m\}$ be the Reeb chords of $\Lambda_1$, and $a_{i_1, \ldots , i_j} \in \Z$. Recall that $\partial_{\epsilon_1}(c)$ has no constant terms. If $$\partial_{\epsilon_2}(c) = \sum_{j=1}^\infty \left(\sum_{i_1, \ldots, i_j} a_{i_1, \ldots, i_j} c_{i_1} \cdots c_{i_j} \right),$$ with only finitely many $i_k$ nonzero, then $$\Phi_L^{\epsilon_1}(\partial_{\epsilon_2}(c)) = \sum_{j=1}^\infty \left(\sum_{i_1, \ldots, i_j} a_{i_1, \ldots, i_j} \Phi_L^{\epsilon_1}(c_{i_1}) \cdots \Phi_L^{\epsilon_1}(c_{i_j}) \right).$$ We know that for $j>1$, $\Phi_L^{\epsilon_1}(c_{i_1}) \cdots \Phi_L^{\epsilon_1}(c_{i_j})$ must be a sum of words all of length greater than 1 due to Lemma~\ref{NoConstants}. So all of the linear parts of $\Phi_L^{\epsilon_1}(\partial_{\epsilon_2}(c))$ must come from terms when $j=1$, i.e. the linear part of $\partial_{\epsilon_2}(c)$. We have just shown that $$(\Phi_L^{\epsilon_1} \circ \partial_{\epsilon_1})^\ell(c) = (\Phi_L^{\epsilon_2})^\ell \circ \partial_{\epsilon_2}^\ell(c).$$  \\
\indent Now let $$\Phi_L^{\epsilon_1}(c) = \sum_{j=1}^\infty \left(\sum_{i_1, \ldots, i_j} a'_{i_1, \ldots, i_j} c'_{i_1} \cdots c'_{i_j} \right),$$ with only finitely many $i_k$ nonzero, where $c'_{i_1}, \ldots, c'_{i_j} \in \mathcal{C}_{\Lambda_1}$ and $a'_{i_1, \ldots, i_j} \in \Z$. Then taking the augmented differential $\partial_{\epsilon_1}$ of both sides gives $$\partial_{\epsilon_1}(\Phi_L^{\epsilon_1}(c)) = \sum_{j=1}^\infty \left(\sum_{i_1, \ldots, i_j} a'_{i_1, \ldots, i_j} \partial_{\epsilon_1}(c'_{i_1} \cdots c'_{i_j}) \right).$$ By Lemma~\ref{Leibniz} any word of length $j \geq 2$ will be mapped to a sum of words each of which has $j-1$ Reeb chords multiplied by some $\partial_{\epsilon_1}(c_i)$. Since $\partial_{\epsilon_1}(c_i)$ has no constant terms, these words are length at least 2. Thus the linear part of $\partial_{\epsilon_1}(\Phi_L^{\epsilon_1}(c))$ is the linear part of the image of the linear part of $\Phi_L^{\epsilon_1}(c)$ under $\partial_{\epsilon_1}$, and we conclude that $$(\partial_{\epsilon_1} \circ \Phi_L^{\epsilon_1})^\ell(c) = \partial_{\epsilon_1}^\ell \circ (\Phi_L^{\epsilon_1})^\ell(c).$$
\\
\indent Using the facts proven in the previous paragraphs and Lemma~\ref{AugCommute}, we have 
\begin{equation*}
    \begin{split}
\partial_{\epsilon_1}^\ell \circ (\Phi_L^{\epsilon_1})^\ell(c) &= (\partial_{\epsilon_1} \circ \Phi_L^{\epsilon_1})^\ell(c) \\ &= (\Phi_L^{\epsilon_1} \circ \partial_{\epsilon_2})^\ell(c) \\ &= (\Phi_L^{\epsilon_1})^\ell \circ \partial_{\epsilon_2}^\ell(c).
\end{split}
\end{equation*}
Thus $(\Phi_L^{\epsilon_1})^\ell$ is a chain map.
\end{proof}

Now that we have shown the linearized augmented DGA map $(\Phi_L^{\epsilon_1})^\ell$ is indeed a well-defined DGA map, we would like to consider how different geometric phenomena, such as Lagrangian isotopy, affect $(\Phi_L^{\epsilon_1})^\ell$. We do so by proving a linearized version of Theorem~\ref{EHK} parts 1) and 2). Let $\mathcal{C}_{\Lambda_2}$ be the set of Reeb chords of $\Lambda_2$. In \cite{EckholmHondaKalman}, the chain homotopy in part 2) of Theorem~\ref{EHK} is defined on words of Reeb chords as the $\Z[t,t^{-1}]$-linear map $$\Omega(c_1...c_m) = \sum_{j=1}^m \Phi_{L_1}(c_1...c_{j-1}) K(c_j) \Phi_{L_2}(c_{j+1}...c_m)$$ where $c_i \in \mathcal{C}_{\Lambda_2}$ for $1 \leq i \leq m$ and $K$ is a degree 1 map that maps $c_j$ to a finite sum of words in Reeb chords of degree $|c_j|+1$. For our Theorem~\ref{LinearizedEHK}, we define a linearized version of $\Omega$ to use as our chain homotopy in the following lemma. In order to do this we also define a linearized version of $K$ by $K^{\epsilon_1} = \phi^{\epsilon_1} \circ K \circ \phi^{-\epsilon_2}$.

\begin{lemma}
\label{AugmentedOmega} Let $c_1 \cdots c_m$ be a word in $(\mathcal{A}_{\Lambda_2}, \partial_{\Lambda_2})$, and define $\Omega^{\epsilon_1}$ by $$\Omega^{\epsilon_1}(c_1 \cdots c_m) = \sum_{j=1}^m \Phi_{L_1}^{\epsilon_1}(c_1 \cdots c_{j-1}) K^{\epsilon_1}(c_j) \Phi_{L_2}^{\epsilon_1}(c_{j+1} \cdots c_m).$$ Then $$\Omega^{\epsilon_1}(c_1 \cdots c_m) = \phi^{\epsilon_1} \circ \Omega \circ \phi^{-\epsilon_2} (c_1 \cdots c_m).$$
\begin{proof}
Let $c_i \in \mathcal{C}_{\Lambda_2}$ for $1 \leq i \leq m$. First note that 
\begin{equation*}
    \begin{split}
        \Omega^{\epsilon_1}(c_1 \cdots c_m) &= \sum_{j=1}^m (\phi^{\epsilon_1} \circ \Phi_{L_1} \circ \phi^{-\epsilon_2}(c_1 \cdots c_{j-1})) (\phi^{\epsilon_1} \circ K \circ \phi^{-\epsilon_2} (c_j))(\phi^{\epsilon_1} \circ \Phi_{L_2} \circ \phi^{-\epsilon_2}(c_{j+1} \cdots c_m)) \\  &= \sum_{j=1}^m \phi^{\epsilon_1} \circ (\Phi_{L_1} \circ \phi^{-\epsilon_2}(c_1 \cdots c_{j-1}) K \circ \phi^{-\epsilon_2}(c_j) \Phi_{L_2} \circ \phi^{-\epsilon_2}(c_{j+1} \cdots c_m)) \\ &= \phi^{\epsilon_1} \circ (\sum_{j=1}^m \Phi_{L_1} \circ \phi^{-\epsilon_2}(c_1 \cdots c_{j-1}) K \circ \phi^{-\epsilon_2}(c_j) \Phi_{L_2} \circ \phi^{-\epsilon_2}(c_{j+1} \cdots c_m)). 
    \end{split}
\end{equation*}
So to prove the lemma we only need to show $$\Omega \circ \phi^{-\epsilon_2}(c_1 \cdots c_m) = \sum_{j=1}^m \Phi_{L_1} \circ \phi^{-\epsilon_2}(c_1 \cdots c_{j-1}) K \circ \phi^{-\epsilon_2} (c_j) \Phi_{L_2} \circ \phi^{-\epsilon_2}(c_{j+1} \cdots c_m).$$ We do so by showing any word on the left exists uniquely in the sum on the right, and vice versa. \\
\indent Let $$\phi^{-\epsilon_2}(c_1 \cdots c_m) = c_1 \cdots c_m + \sum_{i=1}^m c_1 \cdots \hat{c}_i \cdots c_m + \sum_{i=1}^{m-1} \sum_{k>i}^m c_1 \cdots \hat{c}_i \cdots \hat{c}_k \cdots c_m + \ldots + \hat{c}_1 \cdots \hat{c}_m$$ where $\hat{c}_i = -\epsilon_2(c_i)$. This is a sum of words of length at most $m$. Let $w_n$ be a word in this sum with $n \leq m$ hatted Reeb chords. Then $$\Omega(w_n) = (-1)^n \epsilon_2(c_{i_1} \cdots c_{i_n}) \sum_{j=1}^{m-n} \Phi_{L_1} (c_{i_{n+1}} \cdots c_{i_{n+j-1}})K(c_{i_{n+j}}) \Phi_{L_2}(c_{i_{n+j+1}} \cdots c_{i_m}).$$ Now fix $j$. Let $i_r < i_{n+j}$ be the largest index such that it's Reeb chord is hatted, and $i_s$, $i_{n+j} < i_s$, be the smallest such that its Reeb chord is hatted as well, with $r+s=n$. Then $(-1)^r \epsilon_2(c_{i_1} \cdots c_{i_r})\Phi_{L_1} (c_{i_{n+1}} \cdots c_{i_{n+j-1}})$ is a unique term in $\Phi_{L_1} \circ \phi^{-\epsilon_2} (c_1 \cdots c_{i_{n+j}-1})$, while $K(c_{i_{n+j}}) = K \circ \phi^{-\epsilon_2} (c_{i_{n+j}})$, and $(-1)^s\epsilon_2(c_{i_s} \cdots c_m)\Phi_{L_2}(c_{i_{n+j+1}} \cdots c_m)$ is a unique term in $\Phi_{L_2} \circ \phi^{-\epsilon_2} (c_{i_{n+j}+1} \cdots c_m)$. Therefore their concatenation is a unique word in $$\Phi_{L_1} \circ \phi^{-\epsilon_2} (c_1 \cdots c_{i_{n+j}-1})K \circ \phi^{-\epsilon_2} (c_{i_{n+j}})\Phi_{L_2} \circ \phi^{-\epsilon_2} (c_{i_{n+j}+1} \cdots c_m).$$ Thus any word in $\Omega \circ \phi^{-\epsilon_2} (c_1 \cdots c_m)$ is a unique word in $$\sum_{j=1}^m \Phi_{L_1} \circ \phi^{-\epsilon_2}(c_1 \cdots c_{j-1}) K \circ \phi^{-\epsilon_2} (c_j) \Phi_{L_2} \circ \phi^{-\epsilon_2} (c_{j+1} \cdots c_m).$$ \\
\indent Now consider a word in the sum $\sum_{j=1}^m \Phi_{L_1} \circ \phi^{-\epsilon_2}(c_1 \cdots c_{j-1}) K \circ \phi^{-\epsilon_2} (c_j) \Phi_{L_2} \circ \phi^{-\epsilon_2} (c_{j+1} \cdots c_m)$. For a fixed $j$, we have 
\begin{equation*}
    \begin{split}
&\Phi_{L_1} \circ \phi^{-\epsilon_2} (c_1 \cdots c_{j-1}) K \circ \phi^{-\epsilon_2} (c_j) \Phi_{L_2} \circ \phi^{-\epsilon_2} (c_{j+1} \cdots c_m) \\ &=  (-1)^r\epsilon_2(c_{i_1} \cdots c_{i_r})\Phi_{L_1}(c_1 \cdots c_{j-1})K(c_j)(-1)^{s-r}\epsilon_2(c_{i_{r+1}} \cdots c_s)\Phi_{L_2}(c_{j+1} \cdots c_m) \\ &= (-1)^s\epsilon_2(c_{i_1} \cdots c_{i_s})\Phi_{L_1}(c_1 \cdots c_{j-1})K(c_j)\Phi_{L_2}(c_{j+1} \cdots c_m), 
    \end{split}
\end{equation*}
which is a unique word in the sum $(-1)^s\epsilon_2(c_{i_1} \cdots c_{i_s}) \Omega (w_s)$, where $w_s$ is the word of unhatted Reeb chords in some word in $\phi^{-\epsilon_2}(c_1 \cdots c_m)$ whose hatted Reeb chords are $c_{i_1}, \ldots ,c_{i_s}$. Thus any word in $\sum_{j=1}^m \Phi_{L_1} \circ \phi^{-\epsilon_2}(c_1 \cdots c_{j-1}) K \circ \phi^{-\epsilon_2} (c_j) \Phi_{L_2} \circ \phi^{-\epsilon_2} (c_{j+1} \cdots c_m)$ is a unique word in $\Omega \circ \phi^{-\epsilon_2} (c_1 \cdots c_m)$, and so the lemma follows.
\end{proof}
\end{lemma}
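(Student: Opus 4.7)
The plan is to verify the identity in two steps: first, pull the outer $\phi^{\epsilon_1}$ out of every summand of $\Omega^{\epsilon_1}$, reducing the claim to a statement involving only $\phi^{-\epsilon_2}$; and second, establish the reduced identity by a combinatorial bijection between the expansions of the two sides as sums of words.

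For the first step, each of the three factors $\Phi_{L_1}^{\epsilon_1}(\cdot)$, $K^{\epsilon_1}(\cdot)$, $\Phi_{L_2}^{\epsilon_1}(\cdot)$ in a summand of $\Omega^{\epsilon_1}(c_1 \cdots c_m)$ is, by definition, of the form $\phi^{\epsilon_1} \circ (\cdot) \circ \phi^{-\epsilon_2}$. Because $\phi^{\epsilon_1}:\mathcal{A}_{\Lambda_1} \to \mathcal{A}_{\Lambda_1}$ is the tame algebra isomorphism sending $c \mapsto c + \epsilon_1(c)$ and fixing constants, extended multiplicatively, a product of three $\phi^{\epsilon_1}$-images equals $\phi^{\epsilon_1}$ applied to the product. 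Summing over $j$ and factoring $\phi^{\epsilon_1}$ out of the sum therefore reduces the claim to
\[
\Omega\!\left(\phi^{-\epsilon_2}(c_1 \cdots c_m)\right) \;=\; \sum_{j=1}^m (\Phi_{L_1} \circ \phi^{-\epsilon_2})(c_1 \cdots c_{j-1}) \cdot (K \circ \phi^{-\epsilon_2})(c_j) \cdot (\Phi_{L_2} \circ \phi^{-\epsilon_2})(c_{j+1} \cdots c_m).
\]

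For the second step, expand $\phi^{-\epsilon_2}(c_1 \cdots c_m) = \prod_{i=1}^m(c_i - \epsilon_2(c_i))$ as a sum over subsets $S \subseteq \{1,\ldots,m\}$ of ``hatted'' indices: the $S$-term is the subword on $\{1,\ldots,m\}\setminus S$ with scalar coefficient $(-1)^{|S|}\prod_{i \in S}\epsilon_2(c_i)$. Applying $\Omega$ to such a word chooses an unhatted position $j \notin S$ at which to place $K$, with $\Phi_{L_1}$ on the unhatted letters before $j$ and $\Phi_{L_2}$ on the unhatted letters after. The left-hand side is therefore a sum indexed by pairs $(S,j)$ with $j \notin S$. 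On the right-hand side, fixing $j$ and independently expanding the three factors $\phi^{-\epsilon_2}$ produces subsets $S_< \subseteq \{1,\ldots,j-1\}$ and $S_> \subseteq \{j+1,\ldots,m\}$, while $\phi^{-\epsilon_2}(c_j) = c_j - \epsilon_2(c_j)$ contributes only the $c_j$ piece (the constant $-\epsilon_2(c_j)$ drops out under $K$, which is degree-shifting and vanishes on the base ring). The assignment $(S,j) \mapsto (S \cap \{1,\ldots,j-1\},\, j,\, S \cap \{j+1,\ldots,m\})$ is a bijection from the indexing set of the left-hand side to that of the right-hand side, and the corresponding contributions agree term-by-term: the Reeb-chord factors visibly match, the scalars $\epsilon_2(c_i)$ are central and commute past all Reeb chords, and the signs match via $(-1)^{|S|} = (-1)^{|S_<|+|S_>|}$.

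The main obstacle I anticipate is purely the bookkeeping: one must carefully track which positions are hatted, which is the $K$-position, and verify that the sign from hatting in the left-hand side splits correctly across the three factors on the right-hand side. A minor convention to fix is that $K$ extends linearly from Reeb chord generators by $K(1) = 0$, so that $K \circ \phi^{-\epsilon_2}(c_j) = K(c_j)$; this is forced by $K$'s degree shift and by augmentations being supported on degree-$0$ chords. With that convention in place, there is no further geometric input needed beyond the multiplicativity of $\phi^{\epsilon_1}$ and the combinatorial bijection above, and the lemma follows.
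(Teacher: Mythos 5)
Your proposal is correct and follows essentially the same two-step strategy as the paper's proof: first factor the outer $\phi^{\epsilon_1}$ out of each summand using its multiplicativity, then match terms of $\Omega \circ \phi^{-\epsilon_2}$ against the sum $\sum_j \Phi_{L_1}\circ\phi^{-\epsilon_2}(\cdots)\, K\circ\phi^{-\epsilon_2}(c_j)\, \Phi_{L_2}\circ\phi^{-\epsilon_2}(\cdots)$ term by term. Your indexing by pairs $(S,j)$ with $j \notin S$ and the explicit bijection $(S,j)\mapsto(S_<,j,S_>)$ is the same combinatorial content as the paper's ``hatted chord'' bookkeeping, stated somewhat more cleanly, and your explicit convention $K(1)=0$ justifies the identity $K\circ\phi^{-\epsilon_2}(c_j)=K(c_j)$ that the paper asserts without comment.
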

Now that we have shown that $\Omega^{\epsilon_1} = \phi^{\epsilon_1} \circ \Omega \circ \phi^{-\epsilon_2}$, we can prove the linearized version of parts 1) and 2) of Theorem~\ref{EHK}. We remind the reader that the exponent $\ell$ means taking the linear terms of the expression.

\begin{proof}[Proof of Theorem~\ref{LinearizedEHK}]

For Item 1) in the theorem, let $c$ be a Reeb chord of $\Lambda$. Then
\begin{equation*}
    \begin{split}
    (\Phi_L^{\epsilon_1})^\ell(c) &= (\phi^{\epsilon_1} \circ \Phi_L \circ \phi^{-\epsilon_2})^\ell(c) \\ &= (\phi^{\epsilon_1} \circ \Phi_L)^\ell(c - \epsilon_2(c)) \\ &= (\phi^{\epsilon_1})^\ell (c - \epsilon_2(c)) \\ &= (c + \epsilon_1(c) - \epsilon_2(c))^\ell \\ &= c.
\end{split}
\end{equation*}
\indent To prove Statement 2), we start with the homotopy equation from \cite{EckholmHondaKalman}. $$\Phi_{L_1} - \Phi_{L_2} = \Omega \circ \partial_2 + \partial_1 \circ \Omega,$$ which is equivalent to $$\Phi_{L_1}^{\epsilon_1} - \Phi_{L_2}^{\epsilon_1} = \phi^{\epsilon_1} \circ \Omega \circ \phi^{-\epsilon_2} \circ \phi^{\epsilon_2} \circ \partial_2 \circ \phi^{-\epsilon_2} + \phi^{\epsilon_1} \circ \partial_1 \circ \phi^{-\epsilon_1} \circ \phi^{\epsilon_1} \circ \Omega \circ \phi^{-\epsilon_2}.$$ By Lemma~\ref{AugmentedOmega}, this is equivalent to $$\Phi_{L_1}^{\epsilon_1} - \Phi_{L_2}^{\epsilon_1} = \Omega^{\epsilon_1} \circ \partial_{\epsilon_2} + \partial_{\epsilon_1} \circ \Omega^{\epsilon_1}.$$ Linearizing both sides yields $$(\Phi_{L_1}^{\epsilon_1})^\ell - (\Phi_{L_2}^{\epsilon_1})^\ell = (\Omega^{\epsilon_1} \circ \partial_{\epsilon_2})^\ell + (\partial_{\epsilon_1} \circ \Omega^{\epsilon_1})^\ell.$$ We need to show that the linear terms of the compositions $\Omega^{\epsilon_1} \circ \partial_{\epsilon_2}$ and $\partial_{\epsilon_1} \circ \Omega^{\epsilon_1}$ equal the composition of their linearizations, respectively. Let $a_{i_1, \ldots, i_j} \in \Z$ and $\mathcal{C}_{\Lambda_2} = \{c_1, \ldots, c_n\}$ be Reeb chords in $\Lambda_2$ such that $$w = \sum_{j=1}^\infty \left(\sum_{i_1, \ldots, i_j} a_{i_1, \ldots, i_j} c_{i_1} \cdots c_{i_j} \right) \in  \Ima \partial_{\epsilon_2}.$$ We must show words of length greater than 1 are mapped to words also of length greater than 1. We have that $$\Omega^{\epsilon_1} (w) = \sum_{j=1}^\infty \left(\sum_{i_1, \ldots, i_j} a_{i_1, \ldots, i_j} \Omega^{\epsilon_1}(c_{i_1} \cdots c_{i_j}) \right).$$ The image of words of Reeb chords of length greater than 2 under $\Omega^{\epsilon_1}$ will be a sum of words that each have $\Phi_{L_i}^{\epsilon_1}$ applied to at least two of the letters, which have no constant terms by Lemma~\ref{NoConstants}. Thus those words will be length greater than 1. Now consider the images of the quadratic terms $a_{i_1,i_2} \Omega^{\epsilon_1} (c_{i_1} c_{i_2}) = a_{i_1,i_2}(\Phi_{L_1}^{\epsilon_1} (c_{i_1}) K^{\epsilon_1}(c_{i_2}) + K^{\epsilon_1}(c_{i_1}) \Phi_{L_2}^{\epsilon_1}(c_{i_2}))$. Since none of the Reeb chords in $\Ima \partial_{\epsilon_2}$ have grading $-1$, $K \circ \phi^{-\epsilon_2}(c_{i_1})$ and $K \circ \phi^{-\epsilon_2}(c_{i_2})$ are both sums of words that have nonzero grading. Therefore all of those words in their sums must have a nonzero graded Reeb chord, and since $\phi^{\epsilon_1}$ is the identity on generators with nonzero grading, $\phi^{\epsilon_1} \circ K \circ \phi^{-\epsilon_2}(c_{i_1})$ and $\phi^{\epsilon_1} \circ K \circ \phi^{-\epsilon_2}(c_{i_2})$ have no constant terms. We have just shown that $$(\Omega^{\epsilon_1} \circ \partial_{\epsilon_2})^\ell = (\Omega^{\epsilon_1})^\ell \circ \partial_{\epsilon_2}^\ell.$$ \\
By the same argument used in the proof of Proposition~\ref{DGA}, $(\partial_{\epsilon_1} \circ \Omega^{\epsilon_1})^\ell = \partial_{\epsilon_1}^\ell \circ (\Omega^{\epsilon_1})^\ell$, and so we have the chain homotopy equation: $$(\Phi_{L_1}^{\epsilon_1})^\ell - (\Phi_{L_2}^{\epsilon_1})^\ell = (\Omega^{\epsilon_1})^\ell \circ \partial_{\epsilon_2}^\ell + \partial_{\epsilon_1}^\ell \circ (\Omega^{\epsilon_1})^\ell.$$
\end{proof}

\begin{remark}
\label{LinearizedThmRemark}
    In the proof of statement 2), we impose the condition that the quadratics $c_i c_j$ of $\Ima \partial_{\epsilon_2}$ not have any Reeb chords of grading $-1$ because $K(c_j)$ could have a word of Reeb chords in its sum that all have grading zero and are mapped to a nonzero constant by $\epsilon_1$. This in turn would result in a contant term in the sum $K^{\epsilon_1}(c_j)$, and thus $\Phi_{L_1}^{\epsilon_1} (c_i)K^{\epsilon_1}(c_j)$ could have a linear term, obstucting the desired equality $(\Omega^{\epsilon_1} \circ \partial_{\epsilon_2})^\ell = (\Omega^{\epsilon_1})^\ell \circ (\partial_{\epsilon_2})^\ell$.
\end{remark}

\section{Preserving the $A_\infty$ structure}

Let $((\Phi_L^{\epsilon_1})^k)^*$ be the adjoint of the map $(\Phi_L^{\epsilon_1})^k: F_{\Lambda_2} \rightarrow F_{\Lambda_1}^{\otimes k}$ that sends Reeb chords $c$ to the length $k$ words in the sum $\Phi_L^{\epsilon_1}(c)$. We will show the collection $\{((\Phi_L^{\epsilon_1})^k)^*\}_{k \geq 1}$ is an $A_\infty$ morphism. This was shown by Pan in \cite{Pan}, but we will show it using an argument on the chain level similar to how we did in Section~\ref{Sec:cobordism_map}. We first prove the following two lemmas. We remind the reader that $\partial_{\epsilon_i}^k(c)$ denotes the length $k$ terms in the sum $\partial_{\epsilon_i}(c)$, $i = 1,2$. For this section we are working in $\Z/2\Z$ coefficients.

\begin{lemma}
Let $L$ be an exact Lagrangian cobordisms from $\Lambda_1$ to $\Lambda_2$, and assume $j < k$. Then $$(\partial_{\epsilon_1} \circ (\Phi_L^{\epsilon_1})^j)^k (c) = \sum_{i+n+1 = j}(1^{\otimes i} \otimes \partial_{\epsilon_1}^{k-j+1} \otimes 1^{\otimes n}) \circ (\Phi_L^{\epsilon_1})^j(c)$$
\end{lemma}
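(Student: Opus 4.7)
The plan is to unfold the definitions and apply the Leibniz rule from Lemma~\ref{Leibniz} term-by-term. By construction, $(\Phi_L^{\epsilon_1})^j(c)$ is the sum of those length-$j$ words of Reeb chords of $\Lambda_1$ that appear in $\Phi_L^{\epsilon_1}(c)$. First I would fix one such word $w = c'_1 \cdots c'_j$ and compute $\partial_{\epsilon_1}(w)$ using Lemma~\ref{Leibniz}; since we are working over $\Z/2\Z$ all signs vanish, and we obtain
$$\partial_{\epsilon_1}(c'_1 \cdots c'_j) = \sum_{i+1+n=j} c'_1 \cdots c'_i \, \partial_{\epsilon_1}(c'_{i+1}) \, c'_{i+2} \cdots c'_j.$$

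Next I would isolate the length-$k$ piece on both sides. The key observation is that $\partial_{\epsilon_1}$ carries no constant terms: for any Reeb chord $c'$, the constant part of $\partial_{\epsilon_1}(c') = \phi^{\epsilon_1}(\partial(\phi^{-\epsilon_1}(c')))$ equals $\epsilon_1(\partial(c')) = 0$, since $\epsilon_1$ is a chain map to $(\Z/2\Z,0)$. Consequently every term in the Leibniz expansion above is a sum of words of length at least $j$, and the length-$k$ contributions come exactly from the length-$(k-j+1)$ part $\partial_{\epsilon_1}^{k-j+1}(c'_{i+1})$ of the differential applied to a single letter, with the remaining $j-1$ letters preserved. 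The hypothesis $j<k$ ensures $k-j+1 \geq 2$, so $\partial_{\epsilon_1}^{k-j+1}$ is unambiguous and no subtlety arises from constant parts of $\partial_{\epsilon_1}$ being swallowed into shorter outputs.

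Summing the length-$k$ pieces over all length-$j$ monomials appearing in $(\Phi_L^{\epsilon_1})^j(c)$ then yields
$$(\partial_{\epsilon_1} \circ (\Phi_L^{\epsilon_1})^j)^k(c) = \sum_{i+n+1=j}(1^{\otimes i} \otimes \partial_{\epsilon_1}^{k-j+1} \otimes 1^{\otimes n}) \circ (\Phi_L^{\epsilon_1})^j(c),$$
as claimed. The argument is essentially bookkeeping once the Leibniz rule is in hand; the only point that must be checked carefully is the no-constant-term property of $\partial_{\epsilon_1}$, which is what controls the length of the output on each Leibniz summand. I do not anticipate any serious obstacle beyond this length-tracking check.
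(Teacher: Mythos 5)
Your proof is correct and takes essentially the same route as the paper: expand $(\Phi_L^{\epsilon_1})^j(c)$ into length-$j$ monomials, apply the Leibniz rule of Lemma~\ref{Leibniz}, and observe that the length-$k$ part of each summand forces the differential on the distinguished letter to contribute exactly its length-$(k-j+1)$ component. Your extra check that $\partial_{\epsilon_1}$ has no constant terms is accurate but not actually needed for this particular bookkeeping, since selecting the length-$k$ part already isolates the length-$(k-j+1)$ piece of $\partial_{\epsilon_1}$ regardless.
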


\begin{proof}
Let $$(\Phi_L^{\epsilon_1})^j(c) = \sum_{i_1, \ldots, i_j} a_{i_1, \ldots, i_j} c_{i_1} \cdots c_{i_j}.$$ 
Then 
\begin{equation*}
\begin{split}
\partial_{\epsilon_1} \circ (\Phi_L^{\epsilon_1})^j(c) &= \sum_{i_1, \ldots, i_j} a_{i_1, \ldots, i_j} \partial_{\epsilon_1}(c_{i_1} \cdots c_{i_j}) \\ &= \sum_{i_1, \ldots, i_j} a_{i_1, \ldots, i_j} (\partial_{\epsilon_1}(c_{i_1})c_{i_2} \cdots c_{i_j} + \ldots + c_{i_1} c_{i_2} \cdots \partial_{\epsilon_1}(c_{i_j})).
\end{split}
\end{equation*}
The length $k$ words of this sum will be exactly $$\sum_{i_1, \ldots, i_j} a_{i_1, \ldots, i_j}(\partial_{\epsilon_1}^{k-j+1}(c_{i_1})c_{i_2} \cdots c_{i_j} + \ldots + c_{i_1} c_{i_2} \cdots \partial_{\epsilon_1}^{k-j+1}(c_{i_j})),$$
or
$$\sum_{i_1, \ldots, i_j} a_{i_1, \ldots, i_j} (\partial_{\epsilon_1}^{k-j+1} \otimes 1 \otimes \cdots \otimes 1 + \ldots + 1 \otimes \cdots \otimes 1 \otimes \partial_{\epsilon_1}^{k-j+1})(c_{i_1} \cdots c_{i_j}),$$
which can be rewritten as $$\sum_{i+1+n = j} \sum_{i_1, \ldots, i_j} a_{i_1, \ldots, i_j} (1^{\otimes i} \otimes \partial_{\epsilon_1}^{k-j+1} \otimes 1^{\otimes n})(c_{i_1} \cdots c_{i_j}),$$
and the lemma follows.
\end{proof}

\begin{lemma}
Let $L$ be an exact Lagrangian cobordisms from $\Lambda_1$ to $\Lambda_2$, and assume $j < k$. Then $$((\Phi_L^{\epsilon_1}) \circ \partial_{\epsilon_2}^j)^k(c) = \sum_{k_1+...+k_j = k}((\Phi_L^{\epsilon_1})^{k_1} \otimes ... \otimes (\Phi_L^{\epsilon_1})^{k_j}) \circ \partial_{\epsilon_2}^j(c)$$. 
\end{lemma}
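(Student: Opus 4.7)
The plan is to unpack both sides by exploiting the fact that $\Phi_L^{\epsilon_1}$ is an algebra homomorphism. This follows because $\Phi_L^{\epsilon_1} = \phi^{\epsilon_1}\circ \Phi_L \circ \phi^{-\epsilon_2}$ is a composition of a DGA map and two tame isomorphisms, each of which is multiplicative on the free tensor algebra.

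First I would expand $\partial_{\epsilon_2}^j(c)$ as
$$
\partial_{\epsilon_2}^j(c) \;=\; \sum_{i_1,\ldots,i_j} a_{i_1,\ldots,i_j}\, c_{i_1} \cdots c_{i_j},
$$
the length-$j$ part of $\partial_{\epsilon_2}(c)$, and apply multiplicativity to obtain
$$
\Phi_L^{\epsilon_1}\bigl(\partial_{\epsilon_2}^j(c)\bigr) \;=\; \sum_{i_1,\ldots,i_j} a_{i_1,\ldots,i_j}\, \Phi_L^{\epsilon_1}(c_{i_1}) \cdots \Phi_L^{\epsilon_1}(c_{i_j}).
$$
Next I would decompose each factor by word length, writing $\Phi_L^{\epsilon_1}(c_{i_m}) = \sum_{k_m \geq 1} (\Phi_L^{\epsilon_1})^{k_m}(c_{i_m})$. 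The crucial point is that the sum genuinely starts at $k_m = 1$, which is exactly the content of Lemma~\ref{NoConstants}: $\Phi_L^{\epsilon_1}$ has no constant terms, so no $k_m = 0$ contributions arise.

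Finally I would collect the length-$k$ words. A product of $j$ words with lengths $k_1,\ldots,k_j$ has total length $k_1+\cdots+k_j$, so the length-$k$ component of $\Phi_L^{\epsilon_1}(c_{i_1})\cdots \Phi_L^{\epsilon_1}(c_{i_j})$ is precisely
$$
\sum_{\substack{k_1+\cdots+k_j=k \\ k_m \geq 1}} (\Phi_L^{\epsilon_1})^{k_1}(c_{i_1}) \cdots (\Phi_L^{\epsilon_1})^{k_j}(c_{i_j}).
$$
Identifying the non-commutative concatenation product with the tensor product, as in the $A_\infty$ conventions of Section~\ref{Ainftybackground}, and summing back over $i_1,\ldots,i_j$, yields exactly
$$
\bigl(\Phi_L^{\epsilon_1} \circ \partial_{\epsilon_2}^j\bigr)^k(c) \;=\; \sum_{k_1+\cdots+k_j=k}\bigl((\Phi_L^{\epsilon_1})^{k_1} \otimes \cdots \otimes (\Phi_L^{\epsilon_1})^{k_j}\bigr)\circ \partial_{\epsilon_2}^j(c).
$$

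The argument is essentially formal bookkeeping; the only non-trivial input is Lemma~\ref{NoConstants}, which guarantees the correct indexing on the right-hand side. The hypothesis $j<k$ (really $j \leq k$) just ensures that solutions $(k_1,\ldots,k_j)$ with $k_m\geq 1$ and $\sum k_m = k$ exist; for $j>k$ both sides vanish. The main bookkeeping obstacle is keeping careful track of signs and word-length degrees simultaneously, but since we are working in $\mathbb{Z}/2\mathbb{Z}$ in this section, signs are irrelevant, making the length bookkeeping straightforward.
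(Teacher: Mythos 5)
Your proof is correct and follows essentially the same route as the paper's: expand $\partial_{\epsilon_2}^j(c)$ into length-$j$ words, use that $\Phi_L^{\epsilon_1}$ is an algebra map, and collect the length-$k$ components via compositions $k_1+\cdots+k_j=k$. Your explicit appeal to Lemma~\ref{NoConstants} to justify that each $k_m\geq 1$ is a nice touch that the paper leaves implicit.
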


\begin{proof}
Let $$\partial_{\epsilon_2}^j (c) = \sum_{i_1, \ldots, i_j} a_{i_1,\ldots,i_j} c_{i_1}c_{i_2} \cdots c_{i_j}.$$ 
Then $$\Phi_L^{\epsilon_1} \circ \partial_{\epsilon_2}^j (c) = \sum_{i_1, \ldots, i_j} a_{i_1, \ldots ,i_j} \Phi_L^{\epsilon_1} (c_{i_1}c_{i_2} \cdots c_{i_j}) = \sum_{i_1, \ldots, i_j} a_{i_1, \ldots ,i_j} \Phi_L^{\epsilon_1}(c_{i_1}) \cdots \Phi_L^{\epsilon_1}(c_{i_j}).$$
Then length $k$ words coming from this product will be exactly those resulting from $$(\Phi_L^{\epsilon_1})^{k_1}(c_{i_1})(\Phi_L^{\epsilon_1})^{k_2}(c_{i_2}) \cdots (\Phi_L^{\epsilon_1})^{k_j}(c_{i_j}),$$ 
where $k_1+ \ldots + k_j = k$. Summing over all such $k_1, \ldots, k_j$ proves the lemma.
\end{proof}

We now show $\{((\Phi_L^{\epsilon_1})^k)^*\}_{k \geq 1}$ is an $A_\infty$ morphism. We denote the adjoint of $\partial_{\epsilon_i}^k$ by $m_k^{\epsilon_i}$, $i=1,2$.

\begin{proof}[Proof of Theorem~\ref{AinftyTHM}]

We first consider the chain groups and then dualize. Consider the equation $$(\partial_{\epsilon_1} \circ \Phi_L^{\epsilon_1})^k(c) = (\Phi_L^{\epsilon_1} \circ \partial_{\epsilon_2})^k(c).$$
This can be rewritten $$ \sum_{j=1}^k (\partial_{\epsilon_1} \circ (\Phi_L^{\epsilon_1})^j)^k (c) = \sum_{j=1}^k ((\Phi_L^{\epsilon_1}) \circ \partial_{\epsilon_2}^j)^k (c)$$
because if $j>k$, then the Leibniz rule for $\partial_{\epsilon_1}$ and $\Phi_L^{\epsilon_1}$ being a homomorphism combined with there being no constants in their images imply that a word of length $j>k$ would be sent to a sum of words of length greater than $k$. By the lemmas above, this can be rewritten $$\sum_{j=1}^k \sum_{i+n+1 = j}(1^{\otimes i} \otimes \partial_{\epsilon_1}^{k-j+1} \otimes 1^{\otimes n}) \circ (\Phi_L^{\epsilon_1})^j (c) = \sum_{j=1}^k \sum_{k_1+...+k_j = k}((\Phi_L^{\epsilon_1})^{k_1} \otimes \cdots \otimes (\Phi_L^{\epsilon_1})^{k_j}) \circ \partial_{\epsilon_2}^j(c).$$
Dualizing yields the equation
$$\sum_{j=1}^k \sum_{i+n+1=j} ((\Phi_L^{\epsilon_1})^j)^* \circ (1^{\otimes i} \otimes m_{k-j+1}^{\epsilon_1} \otimes 1^{\otimes n}) = \sum_{j=1}^k \sum_{k_1+...+k_j = k} m_j^{\epsilon_2} \circ (((\Phi_L^{\epsilon_1})^{k_1})^* \otimes \cdots \otimes ((\Phi_L^{\epsilon_1})^{k_j})^*).$$
Reindexing the left side gives the equation
$$\sum_{i+p+n=k} ((\Phi_L^{\epsilon_1})^{i+n+1})^* \circ (1^{\otimes i} \otimes m_p^{\epsilon_1} \otimes 1^{\otimes n}) = \sum_{j=1}^k \sum_{k_1+...+k_j = k} m_j^{\epsilon_2} \circ (((\Phi_L^{\epsilon_1})^{k_1})^* \otimes \cdots \otimes ((\Phi_L^{\epsilon_1})^{k_j})^*).$$
Hence $\{((\Phi_L^{\epsilon_1})^k)^*: (F_1^*)^{\otimes k} \rightarrow F_2^*\}_{k \geq 1}$ is an $A_\infty$ morphism.

\end{proof}

Now that we have shown an $A_\infty$ morphism between $(F_1^*, \{m_k^{\epsilon_1}\}_{k \geq 1})$ and $(F_2^*, \{m_k^{\epsilon_2}\}_{k \geq 1})$, let us consider the induced maps on their higher order product structures by two Lagrangian isotopic cobordisms. If two exact Lagrangian cobordisms from $\Lambda_1$ to $\Lambda_2$ are Lagrangian isotopic (and satisfy the conditions in Theorem~\ref{LinearizedEHK}), then we can dualize the chain homotopy equation from Theorem~\ref{LinearizedEHK} to give us a chain homotopy equation between $((\Phi_{L_1}^{\epsilon_1})^\ell)^*$ and $((\Phi_{L_2}^{\epsilon_2})^\ell)^*$. Since the product structures on $(F_1^*, \{m_k^{\epsilon_1}\}_{k \geq 1})$ and $(F_2^*, \{m_k^{\epsilon_2}\}_{k \geq 1})$ are defined on quotients of the cohomologies of $(F_1^*, (\partial_{\epsilon_1}^\ell)^*)$ and $(F_2^*, (\partial_{\epsilon_2}^\ell)^*)$, the chain homotopy equation between $((\Phi_{L_1}^{\epsilon_1})^\ell)^*$ and $((\Phi_{L_2}^{\epsilon_2})^\ell)^*$ implies their induced maps on these product structures are equal. Thus we have the following corollary.

\begin{corollary}
\label{corollary}
    Let $L_1$ be Lagrangian isotopic to $L_2$, $\epsilon_1$ an augmentation for $\Lambda_1$, $\epsilon_1 \circ \Phi_{L_1} = \epsilon_1 \circ \Phi_{L_2}$, and the quadratics in the image of $\partial_{\epsilon_2}$ have no degree $-1$ Reeb chords. Then the induced maps of $((\Phi_{L_1}^{\epsilon_1})^\ell)^*$ and  $((\Phi_{L_2}^{\epsilon_1})^\ell)^*$ are equal on the higher order product structures. 
\end{corollary}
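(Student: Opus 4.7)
The approach I would take is a direct combination of Theorem~\ref{LinearizedEHK} part 2), Theorem~\ref{AinftyTHM}, and the inductive construction of the higher order $A_\infty$ products reviewed in Section~\ref{Ainftybackground}.

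First, the hypotheses of the corollary precisely match those of Theorem~\ref{LinearizedEHK} part 2), so I would obtain the chain-level homotopy equation
$$(\Phi_{L_1}^{\epsilon_1})^\ell - (\Phi_{L_2}^{\epsilon_1})^\ell = (\Omega^{\epsilon_1})^\ell \circ \partial_{\epsilon_2}^\ell + \partial_{\epsilon_1}^\ell \circ (\Omega^{\epsilon_1})^\ell.$$
Dualizing term by term produces a cochain homotopy
$$((\Phi_{L_1}^{\epsilon_1})^\ell)^* - ((\Phi_{L_2}^{\epsilon_1})^\ell)^* = (\partial_{\epsilon_2}^\ell)^* \circ ((\Omega^{\epsilon_1})^\ell)^* + ((\Omega^{\epsilon_1})^\ell)^* \circ (\partial_{\epsilon_1}^\ell)^*$$
on the linearized cochain complexes $(F_i^*, (\partial_{\epsilon_i}^\ell)^*)$, so the two induced maps on the cohomologies $H^*(F_i^*, (\partial_{\epsilon_i}^\ell)^*)$ coincide.

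Next, I would invoke the inductive description of the higher products from \cite{CivanEtnyreKoprowskiSabloffWalker}: each $m_k$ for $k \geq 3$ is defined not on the full linearized cohomology but on a subquotient that is determined inductively by the lower-order products. By Theorem~\ref{AinftyTHM}, each $((\Phi_{L_i}^{\epsilon_1})^\ell)^*$ is the first component of an $A_\infty$ morphism, hence respects these subquotients, and its action on each one is governed entirely by its cohomology-level behavior. Since those two cohomology-level maps have just been shown to agree, the induced maps on every higher product structure must agree as well.

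The main obstacle I expect is making precise, at each inductive stage, the claim that a cochain homotopy between the two first components descends to an equality between the maps induced on the subquotient where $m_k$ is defined. This should follow from a diagram chase using the $A_\infty$ relations satisfied by $\{((\Phi_{L_i}^{\epsilon_1})^k)^*\}_{k \geq 1}$, together with the observation that any ambiguity coming from the choice of cocycle representatives is collapsed when passing to the next subquotient in the tower.
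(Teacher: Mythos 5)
Your proposal is correct and follows essentially the same route as the paper: dualize the chain homotopy from Theorem~\ref{LinearizedEHK} part 2) to obtain a cochain homotopy between $((\Phi_{L_1}^{\epsilon_1})^\ell)^*$ and $((\Phi_{L_2}^{\epsilon_1})^\ell)^*$, conclude the two maps agree on linearized cohomology, and then use that the higher products of \cite{CivanEtnyreKoprowskiSabloffWalker} are defined on (sub)quotients of that cohomology. Your added attention to checking that the homotopy descends through the inductive tower of subquotients is a point the paper passes over quickly, but it does not change the argument.
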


\bibliographystyle{plain} 
\bibliography{cited}

\end{document}